\crefname{hypothesis}{Hypothesis}{Hypotheses}
\renewcommand{\vec}[1]{ \boldsymbol{#1}}
\renewcommand{\vec}[1]{ \boldsymbol{#1}}
\title{Efficient solution of symmetric eigenvalue problems from families of
  coupled systems \thanks{\funding{The first
      author was partially supported by the Stenb\"{a}ck foundation, the second
      author by the Magnus Ehrnrooth foundation, and the third author by the
      V\"{a}is\"{a}l\"{a} foundation along with the Academy of Finland (312340).
    }}}
\author{Antti Hannukainen\thanks{Department of Mathematics and Systems Analysis, Aalto University
  (\email{antti.hannukainen@aalto.fi}, \email{jarmo.malinen@aalto.fi}, \email{antti.ojalammi@aalto.fi})}
\and Jarmo Malinen\footnotemark[2] 
\and Antti Ojalammi\footnotemark[2]}
\ifpdf \hypersetup{ pdftitle={Efficient solution of symmetric
    eigenvalue problems from families of coupled systems},
  pdfauthor={A. Hannukainen, J. Malinen, and A. Ojalammi} } \fi
\begin{document}

\maketitle

\begin{abstract}
  Efficient solution of the lowest eigenmodes is studied for a family of related
  eigenvalue problems with common $2\times 2$ block structure. It is assumed
  that the upper diagonal block varies between different versions while the
  lower diagonal block and the range of the coupling blocks remains unchanged.
  Such block structure naturally arises when studying the effect of a subsystem
  to the eigenmodes of the full system. The proposed method is based on
  interpolation of the resolvent function after some of its singularities have been
  removed by a spectral projection. Singular value decomposition can be used to
  further reduce the dimension of the computational problem. Error analysis of
  the method indicates exponential convergence with respect to the number of
  interpolation points. Theoretical results are illustrated by two numerical
  examples related to finite element discretisation of the Laplace operator.
\end{abstract}

\begin{keywords}
eigenvalue problem, subspace method, dimension reduction, acoustics
\end{keywords}

\begin{AMS}

\end{AMS}


\section{Introduction}
There is often a need to study the effect of a subsystem to the vibration modes
of the whole system. For example, consider the modal computations of a vocal
tract constrained into a Magnetic Resonance Imaging (MRI)
scanner~\cite{Hannukainen:2007, Kuortti:Sigproc2016}. In this case, the system
consists of the vocal tract air volume (i.e., \emph{the interior system}) that
changes during speech, and the air volume of the MRI head coil (i.e., \emph{the
  exterior system}) that stays unchanged, see Figure \ref{fig:vt_mri}. For high
resolution description of speech production, it is desirable to compute the
resonances for a very large number of vocal tract shapes. In order to speed up these
computations, there is a strong incentive to precompute the effect of the
unchanging exterior system and use it efficiently.

Modal analysis of systems consisting of interior and exterior parts
leads to an algebraic eigenvalue problem
\begin{equation} \label{eq:AlgEigValP}
  A \vec{x} = \lambda M \vec{x} \quad \mbox{with the normalisation} \quad \vec{x}^T M\vec{x} = 1
\end{equation}
that can accordingly be decomposed as
\begin{equation}
\label{eq:problem1}
\begin{bmatrix} A_{11} & A_{12} \\ A_{21} & A_{22} \end{bmatrix} \begin{bmatrix} \vec{x}_1 \\ \vec{x}_2 \end{bmatrix} 
= \lambda \begin{bmatrix} M_{11} & M_{12} \\ M_{21} & M_{22} \end{bmatrix} \begin{bmatrix} \vec{x}_1 \\ \vec{x}_2 \end{bmatrix}.
\end{equation}
Here the matrix blocks $A_{11}$ and $A_{22}$ refer to interior and exterior
systems, respectively, and the matrix blocks $A_{12}$ and $A_{21}=A_{12}^T$ are related
to the coupling between the two systems. The same descriptions hold for the
matrix $M$. In the following, we assume that the matrices $A$ and $M$ are large,
sparse, symmetric, and positive definite, implying the same properties for
$A_{ii}$ and $M_{ii}$ for $i=1,2$. This assumption is satisfied, e.g., when
problem~\eqref{eq:AlgEigValP} is related to the finite element discretisation of
an elliptic PDE.

In this article, a novel method is proposed for efficiently solving a large
number of different versions of problem~\eqref{eq:problem1} for the smallest
eigenvalues $\lambda \in (0,\Lambda)$, $\Lambda > 0$, together with the
corresponding eigenvectors. In applications, the number of eigenvalues in
$(0,\Lambda)$ is typically much smaller than the dimension of the full problem.
It is assumed that the matrices $A_{22}, M_{22}$ and subspaces
$\mathop{range}(A_{21})$, $\mathop{range}(M_{21})$ remain unchanged while the
matrix $A_{11}$ varies between different versions of the
problem~\eqref{eq:problem1}. In the proposed method, the matrix blocks related
to the exterior system are replaced by ones with considerably smaller dimension.
As shown in Section~\ref{sec:num_ex}, the time required to solve eigenvalues of
interest for the acoustic system shown in Figure~\ref{fig:vt_mri} is reduced
from $25$ to $5$ seconds, not accounting for precomputation time. In a family of
$1\,000$ different vocal tract samples this constitutes a saving of over five
hours. After $40$ eigensolves, the proposed method is faster even when the
precomputational time is taken into account.

There exists a considerable amount of literature on the solution of large,
sparse, symmetric and positive definite eigenvalue problems,
see~\cite{Parlett:1998}. The state-of-the-art solution method for this class of
problems is the Lanczos iteration, which is a Rayleigh--Ritz method based on
solving the eigenvalue problem in a Krylov subspace. When the interest lies in
the smallest eigenvalues, the convergence of the iteration is sped up by using
the shift-and-invert strategy, i.e., considering the eigenvalue problem related
to the matrix $(A+\sigma M)^{-1}$ for some $\sigma \in \mathbb{R}$ instead.

As such, the Lanczos iteration is not well suited for including precomputations
involving the exterior system. Computing the lowest eigenmodes using
shift-and-invert strategy requires the action of $(A+\sigma M)^{-1}$ in each
iteration step. As several linear systems need to be solved, the matrix
$(A+\sigma M)$ is typically factorised, e.g, using the $LDL^T$ factorisation.
Unfortunately, all factorisations have to be recomputed for different versions
of Eq.~\eqref{eq:problem1}. In doing so, the block structure of the problem
should be taken into account; see Section~\ref{sec:cost} for an example in
recycling information in computing block Cholesky factorisations. However, such
a strategy does not easily allow for dimension reduction in (the exterior part
of) the eigenvalue problem.

We propose a \emph{condensed pole interpolation} (CPI) method that is based on
the Rayleigh--Ritz procedure. In CPI, a subspace related to the exterior part of
the problem is precomputed by a combination of a spectral projection, Chebyshev
interpolation of the resolvent after removal of poles, and dimension reduction
using singular value decomposition (SVD). This subspace is constructed only
once, and it can be reused for different versions of $A_{11}$. For each version
of Eq.~\eqref{eq:problem1}, one solves a much smaller symmetric, positive
definite eigenvalue problem using, e.g., the Lanczos iteration with the
shift-and-invert strategy. Dimension reduction using SVD in the context of
eigenvalue problems has been studied, e.g., in~\cite{Fumagalli:2016,
  Buchan:2013,Grabner:2016}.

Our approach has some similarities with the component mode synthesis (CMS)
introduced in the 1960's as a substructuring method for engineering
simulations~\cite{hurty1960vibrations,bampton:coupling_1968}. An error estimate
for the original CMS is given in \cite{voss:amls:2006,Bourquin1992}, error
indicator has been studied in \cite{Kim:Rel_error:2014,Kim:Error_AMLS:2016}, and
more efficient variants have been introduced, e.g., in
\cite{Park:Partitioned:2004,Kim:Enhanced_CB:2015,Kim:Structural:2017,saad:amls}.
The CMS method has been further developed into automated multi-level
substructuring (AMLS) method having a much smaller precomputational cost without
loss of accuracy, see \cite{Benninghof:AMLS:2004, Kim:Enhanced_AMLS:2015}. The
rational filtering domain decomposition eigenvalue solver (RF-DDES) has recently
been proposed in \cite{Saad:2018} for computing eigenvalues in a spectral
interval of interest by using a Neumann series approximation of the resolvent
function. The underlying philosophy of CPI is similar to RF-DDES.

The outline of the work is as follows. The required background is reviewed in
Section~\ref{sec:prel}, and CPI is introduced together with its error analysis
in Section~\ref{sec:method}. Eigenvalue error estimates are given in
Section~\ref{sec:finalbounds}, and the optimal selection of the two parameter
values, required by CPI, is discussed in Section~\ref{sec:cost}. Further
dimension reduction is the matter of Section~\ref{sec:dim_red}. Finally, the
theoretical treatment is illustrated in numerical examples in
Section~\ref{sec:num_ex}.

\begin{figure}[h]
  \centering
  \includegraphics[height=.36\textheight]{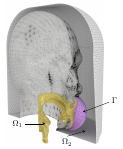}
  \includegraphics[height=.36\textheight]{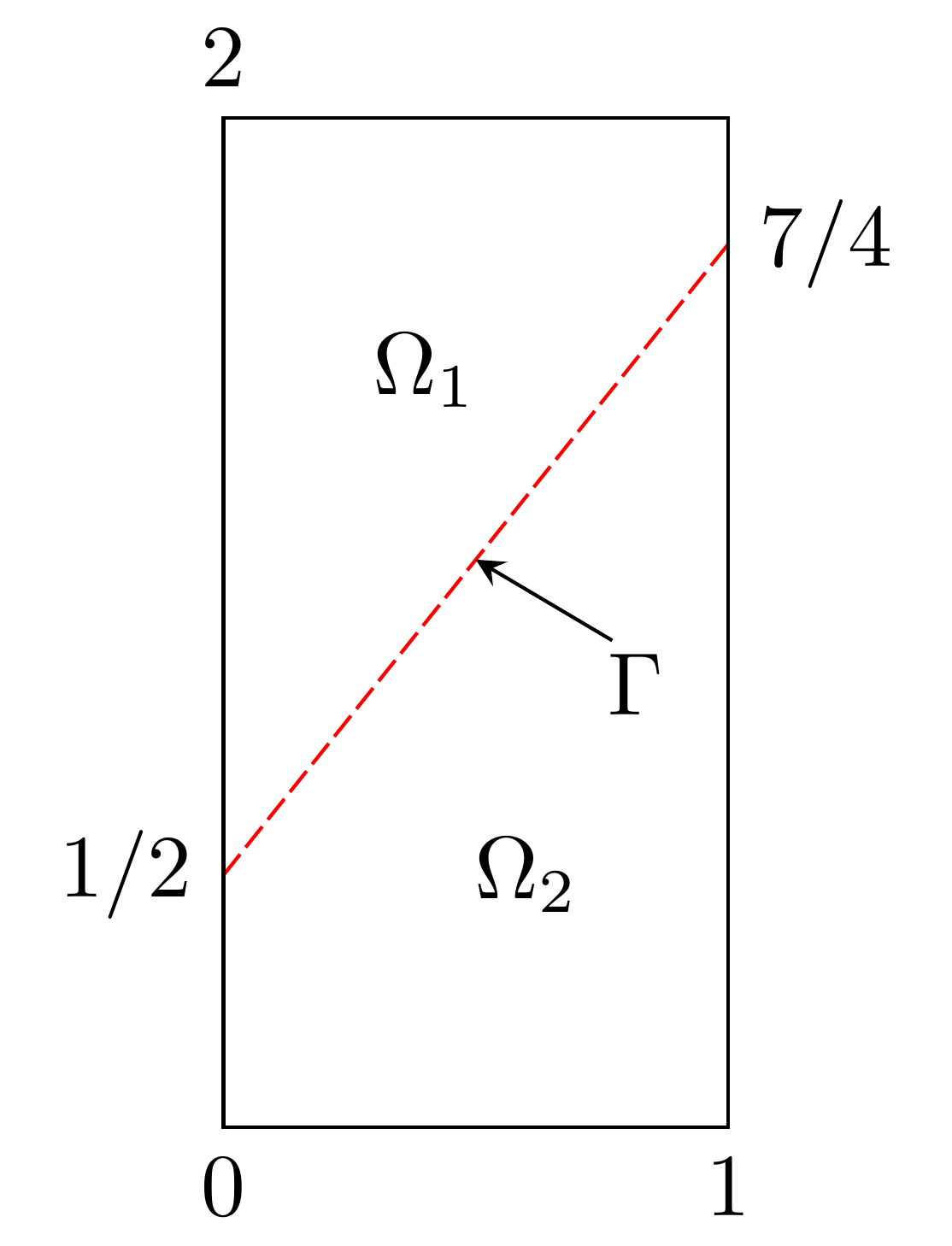}
  \caption{The domains considered in the article. Left: an acoustic system with
    a human vocal tract inside an MRI head coil. The interior domain $\Omega_1$ is
    connected to the exterior domain $\Omega_2$ via the interface $\Gamma$. The
    exterior domain is symmetric around the cross section. Right:
    two-dimensional rectangular domain with a non-symmetric diagonal interface
    marked by the dashed line.}
  \label{fig:vt_mri}
\end{figure}

\section{Background}
\label{sec:prel}
Let $A, M \in \mathbb{R}^{n \times n}$ be symmetric, positive definite
matrices.  Let $(\lambda,\vec{x}) \in \mathbb{R}\times \mathbb{R}^{n}
\setminus \{ 0 \}$, $\lambda > 0$, be a solution of the \emph{full}
symmetric eigenvalue problem $A\vec{x} = \lambda M\vec{x}$ such that
$\vec{x}^T M \vec{x} = 1$. For such $\lambda$'s, we write
\begin{equation} \label{eq:MatrixPencilSpectrum}
  \lambda \in \sigma(A,M) := \left \{ z \in \mathbb{C} \, : \, A - z M \text{ is not invertible} \right \}.
\end{equation}
From now on, denote $\vec{x} = [ \vec{x}_1 \; \vec{x}_2 ]^T$ where
$\vec{x}_1 \in \mathbb R^{n_1},\; \vec{x}_2 \in \mathbb R^{n_2}$ with
$n = n_1 + n_2$.  We call this the \emph{standard splitting} of
$\mathbb{R}^{n}$ where $\mathbb R^{n_1}$ and $\mathbb R^{n_2}$ are
called \emph{interior} and \emph{exterior spaces}, respectively. Using
the standard splitting, the full eigenvalue problem has the structure
\begin{equation}
  \label{eq:dec_sys}
  \begin{bmatrix} A_{11} & A_{12} \\ A_{21} & A_{22} 
  \end{bmatrix}
  \begin{bmatrix} \vec{x}_1 \\ \vec{x}_2 \end{bmatrix} = 
  \lambda
  \begin{bmatrix} M_{11} & M_{12} \\ M_{21} & M_{22} \end{bmatrix}
  \begin{bmatrix} \vec{x}_1 \\ \vec{x}_2 \end{bmatrix}
\end{equation} 
where the blocks $A_{11},M_{11} \in \mathbb{R}^{n_1 \times n_1}$,
$A_{22},M_{22} \in \mathbb{R}^{n_2 \times n_2}$, $A_{12},M_{12} =
\mathbb{R}^{n_1 \times n_2}$, and $A_{21},M_{21} = \mathbb{R}^{n_2
  \times n_1}$.

The topic of this work is the solution of different versions of the eigenvalue
problem \eqref{eq:dec_sys} where the matrices $A_{11}$, $M_{11}$ are free to vary
but the matrices $A_{22}$, $M_{22}$ and subspaces
$\mathop{range}(A_{21})$, $\mathop{range}(M_{21})$ stay the same. In this case,
one can afford even expensive precomputations for the unchanging components as a
part of the eigenvalue solution method.
 
As an example, consider the acoustic system shown in
Figure~\ref{fig:vt_mri}. In this case, the eigenvalue problem
\eqref{eq:dec_sys} arises from finite element discretisation of the
variational eigenvalue problem: Find $(\lambda' , u) \in \mathbb{R}
\times \mathcal{V}$ such that
\begin{equation} \label{eq:cont_eigen}
(\nabla u,\nabla v) = \lambda' (u,v)  \quad \text{ for all } \quad v \in \mathcal{V}
\end{equation}
where $(\cdot,\cdot)$ is the inner product of $L^2(\Omega)$, and the
subspace $\mathcal{V} \subset H^1(\Omega)$ enforces the homogeneous Dirichlet
boundary condition at least on a part of the boundary $\partial
\Omega$. Then the resulting $A,M \in \mathbb R^{n \times n}$ are
symmetric, positive definite stiffness and mass matrices,
respectively.

The standard splitting in \eqref{eq:cont_eigen} arises from
decomposition of the domain $\Omega$ into non-overlapping subdomains
$\Omega_1$ and $\Omega_2$, corresponding to varying and unchanging
parts of the system, respectively. The interior interface between the
two subdomains is denoted by $\Gamma = \left( \partial \Omega_1 \cup
\partial \Omega_2 \right) \setminus \partial \Omega$. The vectors
$\vec x_1 \in \mathbb{R}^{n_1}$ and $\vec{x}_2 \in \mathbb{R}^{n_2}$
correspond to the degrees of freedom of the finite element space on
$\Omega$ corresponding to $\Omega_1$ and $\Omega_2$, respectively. In
addition to $n_1$ and $n_2$, we define a third characterising integer
\begin{equation}
\label{eq:defNG}
n_\Gamma := \mathop{dim}( \mathop{range}( \begin{bmatrix} M_{21} & A_{21} \end{bmatrix})) 
\end{equation}
which gives the number of degrees of freedom over which the interior and the
exterior systems interact on the interface~$\Gamma$. The FEM discretisation of
the full domain $\Omega$ can be carried out in many ways, and the interface
$\Gamma$ need not be consistent with the FEM mesh. However, the three numbers
$n_1, n_2,n_\Gamma$ can always be extracted from the standard splitting.

\subsection{Subspace Methods}

Most solution methods for eigenvalue problems are of Rayleigh-Ritz type in which
the eigenvalue problem is projected to a given subspace of
$\mathbb{R}^n$~\cite{Parlett:1998}. For this purpose, let $Q \in \mathbb R^{n
  \times k}$, $k \leq n$, be a \emph{method matrix} with linearly independent
column vectors that is used for defining the \emph{method subspace} $V :=
\mathop{range}(Q)$. Poor conditioning in numerical realisations is avoided by
choosing the column vectors of $Q$ orthonormal in an appropriate inner product;
see Section~\ref{sec:dim_red}.

In the Rayleigh--Ritz procedure, the
eigenvalue problem in $V$ is posed as follows: find $(\tilde{\lambda},
\tilde{\vec{x}}) \in \mathbb{R} \times \mathbb{R}^k$ such that
\begin{equation}
  \label{eq:subspace_eigenvalue}
Q^T A Q \tilde{\vec x} = \tilde{\lambda} Q^T M Q \tilde{\vec{x}}. 
\end{equation}
 The set of approximate eigenvalues $\tilde \lambda$ is denoted by
 $\sigma_Q( A,M ) := \sigma(Q^T AQ, Q^T M Q )$ as in
 Eq.~\eqref{eq:MatrixPencilSpectrum}. In fact, the set $\sigma_Q( A,M )$ depends 
 only on the method subspace $V$:
 
\begin{lemma}
Let $A,M \in \mathbb{R}^{n\times n}$ be symmetric and positive
definite. In addition, let $Q_1,Q_2 \in \mathbb{R}^{n\times k},k<n$ be
such that $\mathop{range}(Q_1) = \mathop{range}(Q_2)$. Then
\begin{equation}
\sigma_{Q_1}(A,M) = \sigma_{Q_2}(A,M).
\end{equation}
\end{lemma}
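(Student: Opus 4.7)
The plan is to prove the equality of the two spectra by exhibiting a simultaneous congruence transformation between the projected pencils $(Q_1^T A Q_1, Q_1^T M Q_1)$ and $(Q_2^T A Q_2, Q_2^T M Q_2)$, induced by a change of basis between the column sets of $Q_1$ and $Q_2$.

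First I would use the standing assumption (from the discussion preceding the lemma) that the method matrix has linearly independent columns. Then the columns of $Q_1$ and $Q_2$ are two bases of the same $k$-dimensional subspace $V = \range(Q_1) = \range(Q_2)$. Consequently, there exists a unique invertible matrix $T \in \mathbb{R}^{k\times k}$ such that $Q_2 = Q_1 T$; its columns are simply the coordinate vectors of the columns of $Q_2$ with respect to the basis given by the columns of $Q_1$.

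Next I would compute directly
\begin{equation*}
Q_2^T A Q_2 = T^T (Q_1^T A Q_1) T, \qquad Q_2^T M Q_2 = T^T (Q_1^T M Q_1) T.
\end{equation*}
Given any eigenpair $(\tilde\lambda, \tilde{\vec x})$ of the first pencil, i.e.\ $Q_1^T A Q_1 \tilde{\vec x} = \tilde\lambda \, Q_1^T M Q_1 \tilde{\vec x}$, setting $\tilde{\vec y} = T^{-1} \tilde{\vec x}$ yields
\begin{equation*}
Q_2^T A Q_2 \, \tilde{\vec y} = T^T Q_1^T A Q_1 \tilde{\vec x} = \tilde\lambda \, T^T Q_1^T M Q_1 \tilde{\vec x} = \tilde\lambda \, Q_2^T M Q_2 \, \tilde{\vec y},
\end{equation*}
so that $\tilde\lambda \in \sigma_{Q_2}(A,M)$. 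Hence $\sigma_{Q_1}(A,M) \subseteq \sigma_{Q_2}(A,M)$. Swapping the roles of $Q_1$ and $Q_2$ (using $T^{-1}$ instead of $T$) gives the reverse inclusion, and thus equality.

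There is no real obstacle beyond justifying the existence of the invertible $T$; once that is in hand, the argument is the standard fact that generalized eigenvalues of a symmetric pencil are preserved under simultaneous congruence. If one wanted to avoid appealing to the linear independence of columns explicitly, one could instead argue via the characterisation $\sigma_Q(A,M) = \{\tilde\lambda : \exists \, 0 \neq \vec{v} \in V \text{ with } Q^T(A - \tilde\lambda M)\vec{v}/\!\!\sim\}$ reformulated as a condition purely on $V$, but the change-of-basis route above is the cleanest.
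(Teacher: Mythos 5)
Your proof is correct. Note that the paper states this lemma without any proof at all (it is treated as a standard fact about the Rayleigh--Ritz procedure), so there is nothing to compare against; your change-of-basis argument --- writing $Q_2 = Q_1 T$ with $T$ invertible, which is justified because the paper's standing assumption is that method matrices have linearly independent columns, and then observing that simultaneous congruence by $T$ preserves the generalized spectrum --- is exactly the standard argument one would supply, and it is complete. The only loose end is the final sentence sketching an alternative ``basis-free'' characterisation, which as written is not a meaningful statement, but since you explicitly discard that route in favour of the congruence argument it does not affect the validity of the proof.
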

\noindent Hence, we can write $\sigma_Q(A,M) = \sigma_{V}(A,M)$ where $V =
\mathop{range}(Q)$. The aim is to find a low dimensional subspace $V$ such that
$\sigma_V( A,M )$ is a reasonable approximation for a relevant part of $\sigma(
A,M )$. Those eigenvalues can be computed using, e.g., the shift-and-invert
Lanczos iteration~\cite{ericsson:lanczos:1980}.

\subsection{Estimate for the relative eigenvalue error}
\label{sec:projections}

The relative error between corresponding eigenvalues in $\sigma(A,M)$ and
$\sigma_V(A,M)$ is estimated by studying approximation of eigenvectors in the
method subspace $V$:
\begin{proposition}
  \label{prop:eigen_proj_estimate} Let $A,M \in \mathbb{R}^{n \times n}$ be
  symmetric and positive definite matrices. Let $(\lambda,\vec{x}) \in
  \sigma(A,M) \times \mathbb{R}^{n} \setminus \{ 0 \}$ be an eigenpair of
  Eq.~\eqref{eq:AlgEigValP} corresponding to a simple eigenvalue $\lambda$ such
  that $\vec{x}^T M \vec{x} = 1$. In addition, assume that the spectral gap condition in  \cite[Th.2.7]{KnOs:07} is satisfied. Then for any $V \subset \mathbb{R}^{n}$ there
  exists $\tilde{\lambda} \in \sigma_V(A,M)$ and $C(\lambda):= C(\lambda;A,M,V)$ such that
  \begin{equation}
  \label{eq:bestapprox} \frac{| \lambda - \tilde{\lambda} |}{\lambda} \leq
C(\lambda) \min_{\vec{v} \in V} \| \vec x  - \vec v \|_A^2  \quad \text{where } \quad \| \vec{x} \|_A : = \| A^{1/2} \vec{x} \|_2.
 \end{equation} 
\end{proposition}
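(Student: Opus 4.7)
The plan is to prove the estimate via the classical Rayleigh--Ritz best-approximation argument applied directly to the generalised eigenproblem.

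I would take $\vec w \in V$ to be the $A$-orthogonal projection of $\vec x$, i.e.\ the minimiser in the right-hand side of \eqref{eq:bestapprox}, and set $\vec e := \vec x - \vec w$. Since $(A - \lambda M)\vec x = \mathbf{0}$, the linear cross terms in the expansion $\vec w = \vec x - \vec e$ cancel, yielding the identity
\begin{equation*}
\vec w^T(A - \lambda M)\vec w = \vec e^T(A - \lambda M)\vec e.
\end{equation*}
Dividing by $\vec w^T M \vec w$ produces the Rayleigh-quotient error $R(\vec w) - \lambda = \vec e^T(A - \lambda M)\vec e / (\vec w^T M \vec w)$. The numerator is bounded by $(1 + \lambda/\lambda_{\min})\|\vec e\|_A^2$ using the norm equivalence $\|\cdot\|_M \leq \lambda_{\min}^{-1/2}\|\cdot\|_A$ with $\lambda_{\min} := \min \sigma(A,M)$, and the denominator satisfies $\vec w^T M \vec w \geq (1 - \|\vec e\|_M)^2$ via Cauchy--Schwarz and the normalisation $\vec x^T M \vec x = 1$, so both are controlled purely in terms of $\|\vec e\|_A$.

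The non-trivial step is to convert this pointwise Rayleigh-quotient estimate into an estimate on an actual Ritz value, since $R(\vec w)$ need not lie in $\sigma_V(A,M)$. For this I would invoke the Courant--Fischer min--max characterisation: writing $\lambda = \lambda_k$ in the ordered spectrum, the inclusion $\tilde\lambda_k \geq \lambda_k$ is automatic, while the matching upper bound $\tilde\lambda_k \leq \lambda_k + C(\lambda)\|\vec e\|_A^2$ follows by inserting a $k$-dimensional test subspace of $V$ --- the span of the $A$-projections of the first $k$ eigenvectors --- into the min--max formula and maximising the Rayleigh quotient over it. The simplicity of $\lambda$ enters twice: it guarantees that the test subspace really has dimension $k$ (so the $A$-projections remain linearly independent) and it controls the cross-terms between distinct but nearby eigenvalues, both effects being absorbed into the constant $C(\lambda)$.

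The main obstacle is precisely this Ritz-value passage. A naive comparison of $R(\vec w)$ with the closest Ritz value only yields a linear bound in $\|\vec e\|_A$; recovering the quadratic order requires either the Kato--Temple inequality or the Babu\v{s}ka--Osborn-style min--max argument above, and both depend quantitatively on the separation of $\lambda$ from the rest of $\sigma(A,M)$, which is where the non-universal dependence hidden in $C(\lambda)$ ultimately sits.
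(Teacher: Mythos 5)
The paper itself does not prove Proposition~\ref{prop:eigen_proj_estimate}; it is quoted as a special case of standard spectral-approximation results and referred to \cite[Theorem 9.12]{Bo:2010} and to \cite{BaOs:1989,Chatelin:1975,KnOs:07}. So you are supplying an argument where the authors supply a citation, and the comparison is against those standard proofs. Your opening block is exactly how they begin: with $\vec w$ the $A$-orthogonal projection of $\vec x$ onto $V$ and $\vec e := \vec x - \vec w$, the identity $\vec w^T(A-\lambda M)\vec w = \vec e^T(A-\lambda M)\vec e$ is correct because $(A-\lambda M)\vec x = \vec 0$ kills the cross terms, and both the numerator and denominator of the Rayleigh-quotient error are controlled by $\| \vec e\|_A^2$ (the denominator bound needs $\|\vec e\|_M<1$, but when $\|\vec e\|_A$ is of order one the claim is trivially true for large enough $C(\lambda)$ since every Ritz value lies between $\min\sigma(A,M)$ and $\max\sigma(A,M)$). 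You also correctly identify the crux: $R(\vec w)\notin\sigma_V(A,M)$ in general, and the naive residual comparison is only first order in $\|\vec e\|_A$.

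The gap is in the Ritz-value passage itself. The Courant--Fischer step you describe --- inserting the span of the $A$-projections of the first $k$ eigenvectors into the min--max formula --- does not yield $\tilde\lambda_k \le \lambda_k + C(\lambda)\,\|\vec e\|_A^2$ with $\vec e$ the error of the \emph{single} eigenvector $\vec x=\vec x_k$. Maximising the Rayleigh quotient over that test space forces you to control $R\bigl(P(\textstyle\sum_j c_j\vec x_j)\bigr)$ for arbitrary coefficients, and for a general combination the cross-term cancellation no longer applies; what this argument delivers is a bound in terms of $\max_{j\le k}\min_{\vec v\in V}\|\vec x_j-\vec v\|_A^2$, the approximation error of the whole invariant subspace of the first $k$ eigenvectors. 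Sharpening that to the single-vector quantity appearing in Eq.~\eqref{eq:bestapprox} is precisely the content of the Knyazev--Osborn estimate the paper cites and requires an extra ingredient (a bound on the $A$-angle between $\vec x$ and the span of the first $k$ Ritz vectors, or a spectral-projection argument); the Kato--Temple alternative has its own unaddressed hypothesis, namely an interval around $R(\vec w)$ containing exactly one Ritz value of the projected pencil, i.e.\ a gap in $\sigma_V(A,M)$ that is not controlled a priori for arbitrary $V$. Finally, the index-matching silently assumes $\dim V\ge k$, whereas the proposition is stated for any $V$, so $\tilde\lambda$ must in general be chosen as the Ritz value nearest $\lambda$ rather than by index. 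None of this makes the outline unsalvageable --- all three points are resolved in the cited references --- but as written the quadratic single-vector bound is asserted rather than proved.
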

\noindent This proposition is a special case of a Hilbert space result in
\cite{KnOs:07}; see also \cite{BaOs:1989,Chatelin:1975,Bo:2010}. We normalise the eigenvectors $\vec{x}$ as $\vec{x}^T M \vec{x} = 1$ instead of using $\vec{x}^T A \vec{x} = 1$ from \cite{KnOs:07}. Because
\begin{equation*}
\min_{\vec{v} \in V} \|\vec{x}-\vec{v}\|^2_A =  \lambda^{-1} \min_{\vec{v} \in V} \left \| \frac{\vec{x}}{\| M^{1/2} \vec{x} \|_2} - \vec{v} \right\|^2_A
\end{equation*}
the different normalisations can be absorbed in $C(\lambda)$. Otherwise the multiplier $\lambda^{-1}$ will appear in Eq.~\eqref{BetaEstimates}. The eigenvector error can be similarly related to the angle between the exact eigenvector and the method subspace, see \cite{KnOs:07}. Except for the term $\lambda^{-1}$ due to normalisation, the dependency of $C(\lambda;A,M,V)$ on its parameters is explained in \cite[Th.2.7]{KnOs:07}.

 Henceforth, the method subspace is required to satisfy
\begin{equation}
\label{eq:subspace_V}
V = \left \{ \; \begin{bmatrix} \vec{v}_1 \\ \vec v_2 \end{bmatrix} \; \bigg| \; \vec{v}_1 \in \mathbb{R}^{n_1} \text{ and } \vec{v}_2 \in V_2 \; \right\} \quad \textrm{ where } \quad V_2 \subset \mathbb{R}^{n_2}.
\end{equation}
Let $\vec{x}$ in Proposition~\ref{prop:eigen_proj_estimate}~be decomposed as in
equation~\eqref{eq:dec_sys}, i.e., $\vec{x} = \begin{bmatrix} \vec{x}_1 &
  \vec{x}_2 \end{bmatrix}^T$. Choosing $\vec{v}$ in Eq.~\eqref{eq:bestapprox} so
that $\vec{v}_1 = \vec{x}_1$ and using
Proposition~\ref{prop:eigen_proj_estimate} leads to
\begin{equation}
\label{eq:rel_error}
\frac{| \lambda - \tilde{\lambda} |}{\lambda} \leq C(\lambda) \min_{\vec v_2 \in V_{2} }  \| \vec x_2 - \vec{v}_2  \|^2_{A_{22}}.
\end{equation}
We conclude that a subspace~$V_2$~should accurately represent the $\vec
x_2$-component of eigenvectors $\vec{x}$ for $\lambda \in (0, \Lambda)$. In the
proposed method, this approximation is guaranteed by constructing $V_2$ using a combination
of spectral projection and Chebyshev interpolation of the resolvent.
 
\begin{remark}
  In the case of multiple eigenvalues, the relative error in eigenvalue
  $\lambda$ is related to the maximum over the corresponding eigenspace
  $E_\lambda$:
\begin{equation*} \frac{| \lambda - \tilde{\lambda} |}{\lambda} \leq
C(\lambda) \max_{\substack{\vec{x} \in E_\lambda \\ \vec{x}^T M  \vec{x} = 1}}  \min_{\vec v \in V} \| \vec x - \vec v \|_A^2.
\end{equation*} 
All upcoming results generalise to multiple eigenvalues by replacing $\vec{x}$
with $\hat{\vec{x}}$ such that
\begin{equation*}
 \max_{\substack{\vec{x} \in E_\lambda \\ \vec{x}^T M  \vec{x} = 1}}  \min_{\vec v \in V} \| \vec x - \vec v \|_A =   \min_{\vec v \in V} \| \hat{\vec x} - \vec v \|_A.
\end{equation*}
\end{remark}
For notational convenience, we assume in the following that all eigenvalues are simple.

\subsection{Method matrix in Component Mode Synthesis}
\label{sec:cms}

In CMS, the domain $\Omega$ is decomposed into several subdomains, and the matrix $A$ is partitioned according to the degrees of freedom corresponding to
the subdomains and the subdomain interfaces. After
partitioning, the matrix $A$ is block diagonalised using an appropriate
elimination matrix. In the last step, the block corresponding to the subdomain
degrees of freedom is truncated by using a select number of eigenvectors of each
local eigenvalue problem.

It is straightforward to adapt CMS to deal with the standard splitting in
Eq.~\eqref{eq:dec_sys} and to perform the dimension reduction only on the
exterior domain. The method matrix $Q$ is constructed as a product of an
elimination matrix
$G := \left[ \begin{smallmatrix} I & 0 \\
    - A_{22}^{-1} A_{21} & I \end{smallmatrix} \right]$ that block diagonalises
the matrix $A$ and a matrix containing eigenvectors related to the $K \ll n_2$
smallest eigenvalues of the subproblem $A_{22} \vec{x}_2 = \lambda M_{22}
\vec{x}_2$. The resulting method matrix is
\begin{equation}
\label{eq:CMSMM}
  Q : = 
  \begin{bmatrix}
     I & 0 \\ - A_{22}^{-1} A_{21} & I
  \end{bmatrix}  
  \begin{bmatrix}
    I & 0 \\ 0  & [\vec{v}_1, \ldots, \vec{v}_K]
  \end{bmatrix}.
\end{equation}
When using $Q$ in Eq.~\eqref{eq:CMSMM}, numerical experiments in Section \ref{sec:num_ex} indicate similar performance as reported in \cite{voss:amls:2006,Bourquin1992} and larger computational effort compared to CPI.

\section{Condensed pole interpolation method}
\label{sec:method}

Assume that $A$ and $M$ are now represented through the standard splitting as in
Eq.~\eqref{eq:dec_sys}. The topic of this section is the construction of the
subspace $V_2$ in Eq.~\eqref{eq:subspace_V}. Consider
the eigenvalue $\lambda > 0$ as fixed, and define an additional bound $\tilde
\Lambda$ satisfying
\begin{equation}
\label{eq:defgamma}
  0 < \lambda < \Lambda < \tilde \Lambda = \gamma \Lambda \quad \text{ for } \quad  \gamma > 1
\end{equation}
where $(0,\Lambda)$ is the spectral interval of interest.

\subsection{Eigenvector basis for the exterior subspace}
Let $(\mu_k,\vec v_k) \in \mathbb{R}\times \mathbb{R}^{n_2} \setminus
\{ 0 \}$ be solutions of the symmetric \emph{exterior} eigenvalue problem,
such that
\begin{equation} \label{ExteriorEVP}
  A_{22} \vec v_k = \mu_k M_{22} \vec v_k \quad \textrm{and} \quad \vec{v}^T_j M_{22} \vec{v}_k  
  = \delta_{j,k} \text{ such that } 1 \leq j,k \leq n_2.
\end{equation}
(Note that $M_{22}$ is positive definite since $M$ is.)  For $\tilde{\Lambda}
> 0$, let $P_{\tilde \Lambda} \in \mathbb{R}^{n_2 \times n_2}$ be the
$M_{22}$-orthogonal projection matrix
\begin{equation}
  \label{eq:Pdef}
  P_{\tilde \Lambda} := \sum_{\{k \; : \; \mu_k \in (0,\tilde{\Lambda})\}} \vec v_k \vec v_k^T M_{22} \quad \text{ satisfying } \quad M_{22} P_{\tilde \Lambda}  = P_{\tilde \Lambda}^T M_{22}.
\end{equation}
We further restrict $V_2$ in Eq.~\eqref{eq:subspace_V}~to subspaces of the type
\begin{equation} \label{FurtherRestriction}
V_2 = \mathop{range}(P_{\tilde \Lambda}) \oplus W_2
\end{equation}
where the \emph{complementing subspace} $W_2 \subset \mathbb{R}^{n_2}$, $W_2 \perp\mathop{range}(P_{\tilde \Lambda})$ in the $A_{22}$-inner product will be
chosen so that the eigenvalue error given by Eq.~\eqref{eq:rel_error} can be
conveniently bounded from above.

\subsection{Error estimate based on projection and interpolation}
For $\lambda \in \sigma (A,M) \setminus \sigma (A_{22}, M_{22})$ and the
corresponding eigenvector $\vec{x} = [\vec{x}_1\; \vec{x}_2]^T$,
Eq.~\eqref{eq:dec_sys} gives
\begin{equation*}
\vec{x}_2 =  (A_{22} - \lambda M_{22})^{-1} Z(\lambda) \vec{x}_1 \quad \textrm{ where }  \quad Z(\lambda) := \lambda M_{21} - A_{21}.
\end{equation*}
Clearly, 
\begin{equation}
\label{eq:v2red1}
\begin{aligned}
  \min_{\vec{v}_2 \in V_2} \| \vec{x}_2 - \vec{v}_2 \|_{A_{22}}
                                                                &= \min_{\vec{v}_2 \in V_2} \| P_{\tilde \Lambda} \vec{x}_2 + (I-P_{\tilde \Lambda}) \vec{x}_2 - \vec{v}_2 \|_{A_{22}} \\
  &= \min_{\vec{v}_2 \in V_2} \| (I-P_{\tilde \Lambda}) (A_{22} - \lambda M_{22} )^{-1} Z(\lambda)\vec{x}_1 - \vec{v}_2 \|_{A_{22}} ,
\end{aligned}
\end{equation}
since $P_{\tilde \Lambda}(A_{22} - \lambda M_{22} )^{-1} Z(\lambda)\vec{x}_1 \in \mathop{range}(P_{\tilde \Lambda}) \subset V_2$
by Eq.~\eqref{FurtherRestriction}. 
Because also $W_2 \subset V_2$, it follows that
\begin{equation} \label{eq:apuW2}
\min_{\vec{v}_2 \in V_2} \| \vec{x}_2 - \vec{v}_2 \|_{A_{22}} \leq \min_{\vec{w}_2 \in W_2} \| (I-P_{\tilde \Lambda}) (A_{22} - \lambda M_{22} )^{-1} Z(\lambda)\vec{x}_1 - \vec{w}_2 \|_{A_{22}} .
\end{equation}

So as to introduce the CPI method, we proceed to construct the complementing
subspace $W_2$ for Eq.~\eqref{eq:apuW2} depending on $\tilde \Lambda$, some
distinct interpolation points $\{ \xi_i \}_{i=1}^N \subset (0,\Lambda)$, and
subspaces $\mathop{range}(M_{21})$ and $\mathop{range}(A_{21})$ related to the
standard splitting of the original matrices $A$ and $M$; i.e.,
\begin{equation*}
W_2 = W_2 \left(\tilde \Lambda, \{ \xi_i \}_{i=1}^N, \mathop{range}(\begin{bmatrix} M_{21} & A_{21} \end{bmatrix} )  \right).
\end{equation*}
Let
\begin{equation*}
  f_{\tilde \Lambda}(\xi) := (I-P_{\tilde \Lambda}) (A_{22} - \xi M_{22} )^{-1} \in \mathbb{R}^{n_2 \times n_2} \quad \textrm{for } \xi \in (0,\Lambda) \setminus \sigma(A_{22},M_{22}).
\end{equation*}
Then $A_{22} \vec v_k - \xi M_{22} \vec v_k = ( \mu_k - \xi) M_{22} \vec v_k$ where $(\mu_k, \vec v_k)$, $k = 1, \ldots , n_2$, are given by Eq.~\eqref{ExteriorEVP}. This
implies 
\begin{equation}
\label{eq:smap}
(A_{22} - \xi M_{22})^{-1} M_{22} \vec v_k= (\mu_k - \xi)^{-1} \vec v_k.
\end{equation}
Hence,
\begin{equation} \label{InterpolantExpansion}
\begin{aligned}
f_{\tilde \Lambda}(\xi) M_{22} \vec v_k & =  (I-P_{\tilde \Lambda}) (A_{22} - \xi M_{22})^{-1} M_{22} \vec v_k  \\  &=  (\mu_k - \xi)^{-1}  (I-P_{\tilde \Lambda}) \vec v_k  \\
 & =  (\mu_k - \xi)^{-1}  \left ( \vec v_k - \sum_{\{j \; : \; \mu_j \in (0,\tilde \Lambda)\}} \vec v_j \vec v_j^T M_{22} \vec v_k \right) \\
 & = \begin{cases} (\mu_k - \xi)^{-1} \vec v_k & \textrm{ for } k \text{ satisfying } \mu_k > \tilde \Lambda, \\ 0 & \textrm{ otherwise}. \end{cases}
\end{aligned}
\end{equation}
Because $\mathbb{R}^{n_2} = \mathop{span}_{k=1,\ldots,n_2} \{ {M_{22} \vec v_k}
\}$, we conclude that $f_{\tilde \Lambda}$ is, in fact, an analytic function on
the whole interval $(0,\tilde \Lambda)$ that contains the original domain $(0,\Lambda)
\setminus \sigma(A_{22},M_{22})$. Hence, the assumption $\lambda \notin \sigma(A_{22},M_{22})$ can be removed. As $f_{\tilde \Lambda}$ is analytic, it can be
approximated in various ways such as series expansions or interpolation.

In the CPI method, the complementing subspace $W_2$ is chosen so that the right hand side of Eq.~\eqref{eq:apuW2} can be bounded using interpolation error estimates. Let $\{
\xi_i \}_{i=1}^N\subset (0,\Lambda)$ be a set of distinct interpolation points
and
\begin{equation}
\label{eq:defW2}
W_2 := \mathop{span}_{i=1,\ldots N } \left\{ f_{\tilde \Lambda}(\xi_i) \begin{bmatrix} M_{21} & A_{21} \end{bmatrix} \right\}
\end{equation}
In addition, let
\begin{equation}
\label{eq:ngKl}
K(l) := \# \{ \; \mu_k \; | \; \mu_k < l \; \}.
\end{equation}
The dimensions of the spaces $W_2$ and $V_2$ depend on the number of
interpolation points $N$, the number $K(\tilde \Lambda)$ of exterior eigenvalues
$\mu_k$ smaller than $\tilde \Lambda$, and $n_\Gamma$ defined in
Eq.~\eqref{eq:defNG}. Then $\mathop{dim}(W_2) \leq N n_\Gamma$ and $\mathop{dim}(V_2)
\leq N n_\Gamma + K(\tilde \Lambda)$.

\subsection{Outline of the method}
\label{sec:mmatrix}
The CPI method introduced above is based on solving the original
eigenvalue problem by restricting the exterior system to the space $V_2 = \mathop{range}(P_{\tilde \Lambda}) \oplus W_2$ where
\begin{equation*}
  W_2 = \mathop{span}_{i = 1,\ldots,N} \left\{ f_{\tilde \Lambda}(\xi_i)
    \begin{bmatrix} M_{21} & A_{21} \end{bmatrix} \right\}, \quad
f_{\tilde \Lambda}(\xi) = (I-P_{\tilde \Lambda}) (A_{22} - \xi M_{22}
)^{-1},
\end{equation*}
and $\xi_i \in (0,\Lambda)$, $i = 1, 2, \ldots , N$, are the
interpolation points chosen as in Eq.~\eqref{eq:defChebyIP}. Practical realisation of CPI requires a method matrix $Q$ which, by Eq.~\eqref{eq:subspace_V}, has the structure
\begin{equation} \label{eq:ourmethodmatrix}
  Q =
  \begin{bmatrix}
    I & 0 \\ 0 & Q_{22}
  \end{bmatrix} 
\end{equation}
where $I \in \mathbb{R}^{n_1\times n_1}$ is the identity matrix and
$V_2 = \mathop{range}(Q_{22})$. The column vectors of $Q_{22}$ form a basis of the space $V_2$ and are constructed with the aid of the \emph{sample vectors} $\vec{q}_{ij} \in \mathbb{R}^{n_2}$. Let $\{ \vec p_1, \ldots \vec p_r \} \subset \mathbb{R}^{n_2}$ be a
set of (possibly linearly dependent) vectors such that
\begin{equation} \label{pvectors}
\mathop{span}\{\vec p_1, \ldots \vec p_r \} = \mathop{range}(\begin{bmatrix} M_{21} &
  A_{21} \end{bmatrix}).
\end{equation}
The sample vectors are computed by solving the linear systems
\begin{equation}
\label{SampleVectors}
(A_{22} - \xi_i M_{22}) \vec{q}_{i j} = \vec p_j,
\end{equation}
and the complementing subspace $W_2$ is given by
\begin{equation*}
  W_2 = \mathop{span} \left \{ (I-P_{\tilde \Lambda}) \vec{q}_{ij} 
  \; | \; i = 1,\ldots, N \text{ and } j = 1,\ldots, r \right \}.
\end{equation*}  

\noindent Practical realisation of the CPI method consists of the following steps:
\begin{enumerate}
  
\item Compute the smallest eigenpairs $(\mu_k,
  \vec{v}_k)$ of the exterior system $A_{22} \vec v_k = \mu_k M_{22}
  \vec v_k$ satisfying $\mu_k \leq \tilde{\Lambda}$.

\item Compute sample vectors $\vec q_{11},\ldots,\vec q_{Nr}$ as solutions of $(A_{22} - \xi_i M_{22}) \vec{q}_{i j} = \vec p_j$ for $i=1,\ldots,N$ and $j=1,\ldots,r$. 
  
\item Collect the eigenvectors from Step~1 and the sample vectors from Step~2
  into matrix $B = [\vec v_1, \ldots, \vec v_K, (I-P_{\tilde{\Lambda}}) \vec
  q_{11}, \ldots, (I-P_{\tilde{\Lambda}}) \vec q_{Nr}]$. Use SVD to compute an
  orthonormal basis for $V_2$ from $B$. Use the basis vectors as
  columns of $\tilde{Q}$.
\item Solve the eigenvalue problem $\tilde Q^T A \tilde Q \tilde{\vec{x}} = \tilde{\lambda}
  \tilde Q^T M \tilde Q \tilde{\vec{x}}$ using, e.g., the Lanczos iteration.
\end{enumerate}
Step 3 will be modified to include an additional dimension reduction of $V_2$ in
Section~\ref{sec:dim_red}, which leads to a considerably smaller eigenvalue
problem while maintaining the desired accuracy.

\section{Bound for the relative eigenvalue error}
\label{sec:finalbounds}
We proceed to give an upper bound for the relative error.

\begin{lemma} 
\label{lemma:help1}
Let $V_2$ be as defined in Eqs.~\eqref{eq:subspace_V},~\eqref{eq:Pdef}~and~\eqref{eq:defW2}. Denote the Lagrange interpolating polynomials by
\begin{equation*}
\ell_i(\lambda) = \prod_{\substack{1 \leq j \leq N \\ j \neq i}} \frac{\lambda - \xi_j}{\xi_i - \xi_j} \quad \textrm{ for } i=1,\ldots,N
\end{equation*} 
where $\{\xi_i \}_{i=1}^N \subset (0,\Lambda)$ are the interpolation points used to define $W_2$. Then for any $\lambda \in \sigma(A,M) \cap (0,\Lambda)$ there exists $\tilde{\lambda} \in \sigma_V(A,M)$ such that
\begin{equation*}
\frac{ | \lambda - \tilde{\lambda} |}{\lambda} 
\leq C(\lambda) \sum_{\substack{k \\ \mu_k > \tilde \Lambda}} \mu_k c_k(\lambda)^2 \beta_k(\lambda)^2 \quad \textrm{for all} \quad \tilde \Lambda > \Lambda
\end{equation*}
where the coefficients $\beta_k(\lambda)$ and $c_k(\xi)$ are defined by
\begin{equation}
\label{eq:def_bk_ck}
Z(\lambda) \vec x_1 = \sum_{k=1}^{n_2} \beta_k(\lambda) M_{22} \vec v_k \quad \textrm{and} \quad 
c_k(\xi) := \left(\frac{1}{\mu_k - \xi} - \sum_{j=1}^N {\frac{\ell_j (\xi)}{\mu_k - \xi_j}} \right).
\end{equation}
\end{lemma}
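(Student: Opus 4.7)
The plan is to bound $\min_{v_2 \in V_2} \|\vec x_2 - \vec v_2\|_{A_{22}}$ in Proposition~\ref{prop:eigen_proj_estimate} by constructing an explicit candidate in $W_2 \subset V_2$ inspired by Lagrange interpolation, and then diagonalising the resulting error in the $M_{22}$-orthonormal eigenbasis $\{\vec v_k\}$ of the exterior pencil.

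First I would start from Eq.~\eqref{eq:apuW2}, so that it suffices to approximate $f_{\tilde\Lambda}(\lambda) Z(\lambda) \vec x_1$ in $W_2$. Since $Z(\lambda)\vec x_1 = \lambda M_{21}\vec x_1 - A_{21}\vec x_1 \in \mathop{range}(\begin{bmatrix}M_{21} & A_{21}\end{bmatrix})$, each vector $f_{\tilde\Lambda}(\xi_i) Z(\lambda)\vec x_1$ belongs to $W_2$ by Eq.~\eqref{eq:defW2}, and consequently the Lagrange-type candidate
\begin{equation*}
\vec w_2 := \sum_{i=1}^N \ell_i(\lambda)\, f_{\tilde\Lambda}(\xi_i)\, Z(\lambda)\vec x_1
\end{equation*}
is admissible. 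Thus the error to control reduces to $\bigl[f_{\tilde\Lambda}(\lambda) - \sum_i \ell_i(\lambda) f_{\tilde\Lambda}(\xi_i)\bigr] Z(\lambda)\vec x_1$.

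Next I would expand $Z(\lambda)\vec x_1 = \sum_k \beta_k(\lambda) M_{22}\vec v_k$ as in Eq.~\eqref{eq:def_bk_ck} and apply Eq.~\eqref{InterpolantExpansion} termwise. The modes with $\mu_k \leq \tilde\Lambda$ are annihilated by every $f_{\tilde\Lambda}(\xi_i)$ and by $f_{\tilde\Lambda}(\lambda)$, so they drop out, while for $\mu_k > \tilde\Lambda$ the action is scalar multiplication by $(\mu_k - \xi)^{-1}$. This yields
\begin{equation*}
f_{\tilde\Lambda}(\lambda) Z(\lambda)\vec x_1 - \vec w_2 = \sum_{\mu_k > \tilde\Lambda} \beta_k(\lambda)\, c_k(\lambda)\, \vec v_k,
\end{equation*}
with $c_k(\lambda)$ precisely the bracketed expression in Eq.~\eqref{eq:def_bk_ck}.

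Finally, I would compute the $A_{22}$-norm using the biorthogonality relations $\vec v_j^T A_{22} \vec v_k = \mu_k \vec v_j^T M_{22} \vec v_k = \mu_k \delta_{jk}$ coming from Eq.~\eqref{ExteriorEVP}, collapsing the double sum to
\begin{equation*}
\|f_{\tilde\Lambda}(\lambda) Z(\lambda)\vec x_1 - \vec w_2\|_{A_{22}}^2 = \sum_{\mu_k > \tilde\Lambda} \mu_k\, c_k(\lambda)^2\, \beta_k(\lambda)^2,
\end{equation*}
and then combining with Proposition~\ref{prop:eigen_proj_estimate} and Eq.~\eqref{eq:rel_error} to conclude. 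The only real subtlety is verifying that the candidate $\vec w_2$ genuinely lies in $W_2$ despite $W_2$ being defined only through the ranges of $M_{21}$ and $A_{21}$ (so that $\vec x_1$-dependent coefficients are allowed); this follows because $Z(\lambda)\vec x_1$ can be written as an explicit linear combination of columns of $\begin{bmatrix}M_{21} & A_{21}\end{bmatrix}$, and all other manipulations are just standard spectral bookkeeping in the exterior eigenbasis.
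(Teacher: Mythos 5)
Your proposal is correct and follows essentially the same route as the paper's proof: the same Lagrange-type candidate $\vec w_2 = \sum_{i=1}^N \ell_i(\lambda) f_{\tilde\Lambda}(\xi_i) Z(\lambda)\vec x_1$ inserted into Eq.~\eqref{eq:apuW2}, the same expansion of $Z(\lambda)\vec x_1$ in the basis $\{M_{22}\vec v_k\}$, termwise application of Eq.~\eqref{InterpolantExpansion}, and evaluation of the $A_{22}$-norm via the orthogonality relations of Eq.~\eqref{ExteriorEVP}. Your explicit verification that $\vec w_2$ lies in $W_2$ (via writing $Z(\lambda)\vec x_1$ as a combination of columns of $[\,M_{21}\;A_{21}\,]$) is a detail the paper leaves implicit, and is a welcome addition.
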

\noindent Observe that the coefficients $c_k(\xi)$ are the error functions in Lagrange interpolation at points $\{ \xi_i \}_{i=1}^N$ of the rational function $(\mu_k - \lambda)^{-1}$,  and they are analytic functions for all $\xi \notin \sigma(A_{22}, M_{22})$. Note that if $\{ k
\,: \, \mu_k > \tilde \Lambda \} = \emptyset$, then
Eq.~\eqref{AlmostFinalRelativeErrorEstimate} gives $\lambda =
\tilde{\lambda}$.

\begin{proof}
To obtain an upper bound for relative eigenvalue error in
Eq.~\eqref{eq:rel_error}, we choose
\begin{equation}
\label{eq:w2def}
\vec w_2 = \sum_{i=1}^N \ell_i(\lambda) f_{\tilde \Lambda}(\xi_i) Z(\lambda) \vec x_1 \in W_2.
\end{equation}
in Eq.~\eqref{eq:apuW2}, giving
\begin{equation} \label{AlmostFinalRelativeErrorEstimate}
\frac{ | \lambda - \tilde{\lambda} |}{\lambda} 
\leq C(\lambda) \left  \| \left( f_{\tilde \Lambda}(\lambda) - \sum_{i=1}^N \ell_i(\lambda) f_{\tilde \Lambda}(\xi_i) \right) Z(\lambda) \vec x_1 \right \|^2_{A_{22}}.
\end{equation}
The term $Z(\lambda) \vec x_1 \in \mathbb{R}^{n_2}$ has the expansion
\begin{equation}
\label{eq:Zexp}
Z(\lambda) \vec x_1 = \sum_{k=1}^{n_2} \beta_k(\lambda) M_{22} \vec v_k 
\end{equation}
where $\vec v_k$ are given by Eq.~\eqref{ExteriorEVP}.  Using
Eq.~\eqref{InterpolantExpansion} gives
\begin{equation*}
     \left( f_{\tilde \Lambda}(\lambda) - \sum_{i=1}^N \ell_i(\lambda) f_{\tilde \Lambda}(\xi_i) \right) Z(\lambda) \vec x_1  =  \sum_{k, \mu_k > \tilde \Lambda} c_k(\lambda) \beta_k(\lambda) \vec v_k 
\end{equation*}
where $\tilde \Lambda = \gamma \Lambda$ for $\gamma > 1$. By using the exterior eigenvector basis Eq.~\eqref{ExteriorEVP}, 
we get
\begin{equation*}
\Bigg\| \sum_{\substack{k \\ \mu_k > \tilde \Lambda}} c_k(\lambda) \beta_k(\lambda) \vec v_k  \Bigg \|^2_{A_{22}} = \sum_{\substack{k \\ \mu_k > \tilde \Lambda}} \mu_k c_k(\lambda)^2 \beta_k(\lambda)^2
\end{equation*}
which completes the proof.
\end{proof}

To estimate the relative error from
Eq.~\eqref{AlmostFinalRelativeErrorEstimate}, it only remains to bound
$c_k(\lambda)$ and $\beta_k(\lambda)$ from above.  In order to obtain a good upper bound for the functions
$c_k(\lambda)$, it is beneficial to choose the interpolation points as
zeroes of the Chebyshev polynomials on the interval
$(0,\Lambda)$
\begin{equation}
\label{eq:defChebyIP}
\xi_i = \frac{\Lambda}{2} \left[1+\cos{\left(\frac{2i-1}{2N}\pi\right)}\right], \quad i=1,\ldots,N.
\end{equation}
Then the functions $c_k(\lambda)$ defined in Eq.~ \eqref{eq:def_bk_ck}~can be uniformly bounded
on $(0, \Lambda)$ by the standard Lagrange error estimates
\begin{equation} \label{LagrangeErrorEstimates}
\begin{aligned}
   \sup_{\lambda \in (0,\Lambda)}{\left |c_k(\lambda) \right |} 
   	&  \leq \frac{\Lambda^N}{2^{2N-1} N!} \,
   \sup_{\xi \in (0,\Lambda)} \left | \frac{d^N}{d \xi^N } (\mu_k - \xi)^{-1} \right | \\
  &  =   \frac{\Lambda^N}{2^{2N-1}}   \sup_{\xi \in (0,\Lambda)}{(\mu_k - \xi)^{-N-1}}
   = \frac{\Lambda^{N}}{2^{2N-1}(\mu_k - \Lambda)^{N+1}}. 
   \end{aligned}
\end{equation}
To bound the coefficients $\beta_k(\lambda)$, we need a technical lemma:
\begin{lemma} \label{RestrictionEstimateLemma}
Let $M =   \left [\begin{smallmatrix} 
    M_{11} & M_{12} \\ M_{21} & M_{22}
  \end{smallmatrix} \right ]$ be a symmetric, positive definite matrix. Then
\begin{equation*}
  C_M \vec{x}^T M \vec{x} \geq  \| \vec{x}_2 \|_{M_{22}}^2 \quad \text{ for all }  \quad 
\vec{x} =
 \begin{bmatrix} \vec{x}_1 \\ \vec{x}_2 \end{bmatrix}
\end{equation*}
where 
\begin{equation} \label{RestrictionEstimateLemmaConstant}
  C_M^{-1} := \min{\left \{ \eta > 0 \, : \, \eta \in \sigma(I - M_{22}^{-1/2} M_{12}^T M_{11}^{-1} M_{12} M_{22}^{-1/2}) \right \} }.
\end{equation}
\end{lemma}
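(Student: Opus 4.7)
The plan is to reduce the inequality to a statement about the Schur complement of $M_{11}$ in $M$, and then read off the constant as an extremal eigenvalue.

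First, I fix $\vec{x}_2$ and minimise the quadratic form $\vec{x}^T M \vec{x} = \vec{x}_1^T M_{11} \vec{x}_1 + 2 \vec{x}_1^T M_{12} \vec{x}_2 + \vec{x}_2^T M_{22} \vec{x}_2$ over $\vec{x}_1 \in \mathbb{R}^{n_1}$. Since $M$ is symmetric positive definite, so is $M_{11}$, hence $M_{11}$ is invertible and the minimiser is $\vec{x}_1^\ast = -M_{11}^{-1} M_{12} \vec{x}_2$. Substituting gives
\begin{equation*}
\vec{x}^T M \vec{x} \;\geq\; \vec{x}_2^T \bigl( M_{22} - M_{12}^T M_{11}^{-1} M_{12} \bigr) \vec{x}_2,
\end{equation*}
i.e., the Schur complement of $M_{11}$ appears on the right.

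Next, I symmetrically factor out $M_{22}^{1/2}$:
\begin{equation*}
M_{22} - M_{12}^T M_{11}^{-1} M_{12} \;=\; M_{22}^{1/2} \bigl( I - M_{22}^{-1/2} M_{12}^T M_{11}^{-1} M_{12} M_{22}^{-1/2} \bigr) M_{22}^{1/2}.
\end{equation*}
Setting $\vec{y} := M_{22}^{1/2} \vec{x}_2$, the previous lower bound becomes $\vec{y}^T S \vec{y}$ with $S := I - M_{22}^{-1/2} M_{12}^T M_{11}^{-1} M_{12} M_{22}^{-1/2}$, which is symmetric. Since the Schur complement of $M_{11}$ in a symmetric positive definite block matrix is itself positive definite (a standard consequence of the $LDL^T$ block factorisation of $M$), the matrix $S$ is positive definite, so $\sigma(S) \subset (0,\infty)$ and $C_M^{-1} = \min \sigma(S) > 0$.

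Finally, applying the spectral theorem to $S$ yields $\vec{y}^T S \vec{y} \geq C_M^{-1} \|\vec{y}\|_2^2 = C_M^{-1} \vec{x}_2^T M_{22} \vec{x}_2$, which chains with the earlier inequality to give the claim. Essentially no step here is a genuine obstacle; the only points requiring a sentence of care are (i) pointing out that $M_{11}$ is invertible so the pointwise minimisation in $\vec{x}_1$ is legitimate, and (ii) verifying that $S$ is positive definite so that $C_M$ is a finite positive number and the definition~\eqref{RestrictionEstimateLemmaConstant} makes sense.
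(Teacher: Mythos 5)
Your proof is correct, and it takes a genuinely different route from the paper's. The paper embeds the inequality into the one-parameter family $M_k = \left[\begin{smallmatrix} M_{11} & M_{12} \\ M_{21} & (1-k)M_{22} \end{smallmatrix}\right]$, observes that the bound with constant $k$ holds for all $\vec{x}$ iff $M_k \geq 0$, locates the largest admissible $k$ as the first value at which $M_k$ becomes singular, and converts that singularity condition into a spectral condition on $S := I - M_{22}^{-1/2}M_{12}^T M_{11}^{-1}M_{12}M_{22}^{-1/2}$ via the identity $\sigma(AB)\cup\{0\} = \sigma(BA)\cup\{0\}$, concluding $C_M^{-1} \leq \tilde{C}_M$. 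You instead eliminate $\vec{x}_1$ by explicit minimisation, which produces the Schur complement $M_{22}-M_{12}^T M_{11}^{-1}M_{12}$ directly, and then read off the constant as the smallest eigenvalue of $S$ after a congruence by $M_{22}^{1/2}$. Both arguments ultimately rest on the same Schur-complement structure, but yours is shorter and makes explicit two points the paper handles only implicitly or defers: (i) positive definiteness of $S$, which is exactly what makes the set in Eq.~\eqref{RestrictionEstimateLemmaConstant} nonempty and renders the restriction $\eta>0$ there vacuous, and (ii) sharpness of the constant --- taking $\vec{y}$ a minimal eigenvector of $S$, $\vec{x}_2 = M_{22}^{-1/2}\vec{y}$, and $\vec{x}_1 = -M_{11}^{-1}M_{12}\vec{x}_2$ attains equality, which the paper only asserts in the remark following the lemma and leaves to the reader. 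No gap to report.
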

\begin{proof} Since $M$ is positive definite, so are $M_{11}$
  and $M_{22}$. Defining $M_k := \left
    [\begin{smallmatrix} M_{11} & M_{12} \\ M_{21} & (1 - k) M_{22}
    \end{smallmatrix} \right ]$ for $k \geq 0$ we observe that $\vec{x}^T M
  \vec{x} \geq k \| \vec{x}_2 \|_{M_{22}}^2$ for all $\vec{x}$ if and only if
  $M_k \geq 0$. Since $M_0 = M$, the set $\{ k > 0 \, : \, M_k \geq 0 \}$ is
  nonempty by the continuity of the eigenvalues of the matrix elements and the
  fact that the set of invertible matrices is open. Hence, we can define $\tilde
  C_M := \max{ \{ k > 0 \, : \, M_k \geq 0 \}}$. Similarly, we may reason that
  the matrix $M_{k}$ for $k = \tilde C_M$ is not invertible but it satisfies
  $M_k \geq 0$.

  For any $\eta \in \mathbb{R}$ the matrix $M_\eta$ is not invertible if and only if
  $M_\eta \vec{x} = 0$ for some $\vec{x} \neq \vec{0}$ if and only if
  \begin{equation*}
    \begin{aligned}
      \eta \in 1 - \sigma(M_{21} M_{11}^{-1} M_{12} M_{22}^{-1})
      & \subset 1 - \sigma(M_{21} M_{11}^{-1} M_{12} M_{22}^{-1}) \cup \{ 0 \} \\
      &  = \sigma(I - M_{22}^{-1/2} M_{12}^T M_{11}^{-1} M_{12} M_{22}^{-1/2}) \cup \{ 1 \}
      \subset (-\infty, 1]
    \end{aligned}
  \end{equation*}
  since $M_{22}^{-1/2} M_{12}^T M_{11}^{-1} M_{12} M_{22}^{-1/2} \geq
  0$. We used here the fact that $\sigma(AB) \cup \{0\} = \sigma(BA)
  \cup \{0\}$ for all square matrices $A$ and $B$.  Defining now $C_M$
  by Eq.~\eqref{RestrictionEstimateLemmaConstant}, we observe that
  $C_M^{-1} \leq \tilde C_M$, and the proof is thus complete.
\end{proof}
\begin{remark} 
  Note that if $M_{12} = 0$, then $C_M = 1$. We leave it to the reader to verify
  that the estimate in Lemma~\ref{RestrictionEstimateLemma} is, in fact, sharp.
  This can be seen by checking that an equality $C_M^{-1} = \tilde C_M$ holds in
  the proof.
\end{remark}

\noindent We have now completely specified the CPI method together with its
error estimate, and we are in the position to state our first main result:

\begin{theorem}
\label{th:FinalEstimate} 
Let $A$ and $M$ be as defined in Eq.~\eqref{eq:AlgEigValP}, $\{ \xi_i\}_{i=1}^N$
be the Chebyshev interpolation points of the interval $(0,\Lambda)$ given in
Eq.~\eqref{eq:defChebyIP}, $V_2 = \mathop{range}(P_{\tilde \Lambda}) \oplus W_2$
where $W_2$ and $P_{\tilde \Lambda}$ are as defined in
Eqs.~\eqref{eq:Pdef}~and~\eqref{eq:defW2}, respectively. Finally, let $V$ be as
defined in Eq.~\eqref{eq:subspace_V}.

Then for any $\lambda \in \sigma(A,M) \cap (0,\Lambda)$ there exists $\tilde \lambda \in \sigma_V(A,M)$ such that 
\begin{equation} 
\label{FinalRelativeErrorEstimate}
\frac{ | \lambda - \tilde{\lambda} |}{\lambda} 
\leq C_M C(\lambda) \Lambda (4\gamma)^3  \, \left (\frac{1}{4 (\gamma -  1)} \right )^{2N+2} \quad \text{for any} \quad \gamma > 1
\end{equation}
where $C_M,C(\lambda)$ are positive constants defined in Lemma
\ref{RestrictionEstimateLemma} and Proposition \ref{prop:eigen_proj_estimate},
respectively.
\end{theorem}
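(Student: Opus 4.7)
The plan is to combine Lemma~\ref{lemma:help1} with the Chebyshev interpolation bound \eqref{LagrangeErrorEstimates} and Lemma~\ref{RestrictionEstimateLemma}, showing that the tail sum appearing in Lemma~\ref{lemma:help1} decays exponentially in $N$. From Lemma~\ref{lemma:help1} I have
\[
\frac{|\lambda - \tilde\lambda|}{\lambda} \leq C(\lambda) \sum_{\mu_k > \tilde\Lambda} \mu_k\, c_k(\lambda)^2\, \beta_k(\lambda)^2,
\]
and because every index in this sum satisfies $\mu_k - \Lambda > (\gamma - 1)\Lambda$, the Lagrange estimate \eqref{LagrangeErrorEstimates} yields the pointwise bound $c_k(\lambda)^2 \leq \Lambda^{2N}/[2^{4N-2}(\mu_k - \Lambda)^{2N+2}]$; this is the source of the exponential factor in the conclusion.

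The key manoeuvre I would use is to split each summand as
\[
\mu_k\, c_k(\lambda)^2\, \beta_k(\lambda)^2 = \bigl[\mu_k (\mu_k - \lambda)^2 c_k(\lambda)^2\bigr] \cdot \frac{\beta_k(\lambda)^2}{(\mu_k - \lambda)^2},
\]
so that the second factor, when summed over all $k$, equals $\|\vec x_2\|_{M_{22}}^2$. This identity follows from $\vec x_2 = \sum_k \beta_k(\lambda)/(\mu_k - \lambda)\,\vec v_k$, which combines Eq.~\eqref{eq:smap} with the definition of $\beta_k$ in Lemma~\ref{lemma:help1} and the $M_{22}$-orthonormality \eqref{ExteriorEVP}. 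Extending the partial sum over $\{k : \mu_k > \tilde\Lambda\}$ to all $k$ and applying Lemma~\ref{RestrictionEstimateLemma} to the $M$-normalised eigenvector ($\vec x^T M \vec x = 1$) bounds this factor by $C_M$.

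For the bracketed uniform factor, substituting the Lagrange estimate and using the crude inequality $(\mu_k - \lambda)^2 \leq \mu_k^2$ reduces matters to bounding $g(t) := t^3/(t - 1)^{2N+2}$ for $t = \mu_k/\Lambda \geq \gamma$. A single derivative computation shows that $g'(t) < 0$ on $(1, \infty)$ whenever $N \geq 1$, so the supremum is attained at $t = \gamma$ and equals $\gamma^3/(\gamma - 1)^{2N+2}$. Assembling the pieces gives
\[
\sum_{\mu_k > \tilde\Lambda} \mu_k\, c_k(\lambda)^2\, \beta_k(\lambda)^2 \;\leq\; \frac{C_M\, \Lambda\, \gamma^3}{2^{4N-2}(\gamma - 1)^{2N+2}},
\]
which rearranges into the stated form $C_M C(\lambda) \Lambda (4\gamma)^3 [4(\gamma - 1)]^{-2N-2}$ via the identity $4^{2N+2} = 64 \cdot 2^{4N-2}$ together with $(4\gamma)^3 = 64\gamma^3$. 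The main obstacle I anticipate is not conceptual but rather the bookkeeping of the various powers of $\Lambda$, $2$, and $\gamma$; the essential insight is the factorisation that exposes $\|\vec x_2\|_{M_{22}}^2$ inside the tail sum and thus lets Lemma~\ref{RestrictionEstimateLemma} absorb all dependence on the unknown coefficients $\beta_k(\lambda)$.
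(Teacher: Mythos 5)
Your proposal is correct and follows essentially the same route as the paper's proof: Lemma~\ref{lemma:help1} plus the Chebyshev--Lagrange bound \eqref{LagrangeErrorEstimates}, the factorisation that exposes $\sum_k \beta_k(\lambda)^2/(\mu_k-\lambda)^2 = \|\vec{x}_2\|_{M_{22}}^2 \leq C_M$ via Lemma~\ref{RestrictionEstimateLemma}, and a monotonicity argument placing the supremum of the remaining rational factor at $\mu_k = \tilde\Lambda$. The only cosmetic difference is that you apply $(\mu_k-\lambda)^2 \leq \mu_k^2$ before maximising $t^3/(t-1)^{2N+2}$, whereas the paper maximises $x(x-\lambda)^2/(x-\Lambda)^{2N+2}$ directly and discards $\lambda$ afterwards; both yield the identical constant $C_M C(\lambda)\Lambda(4\gamma)^3\bigl(4(\gamma-1)\bigr)^{-(2N+2)}$.
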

\noindent For a given $\Lambda$, parameters $\gamma,N$ determine both the accuracy and the computational cost of the CPI method. Specifically, all exterior eigenpairs $(\mu_k,\vec{v}_k)$ satisfying $\mu_k \leq \gamma \Lambda$ together with $n_\Gamma N$ linear systems have to be solved. The spectra of $\sigma(A,M)$ and $\sigma(A_{22},M_{22})$ do not restrict the choice of $\gamma$. 
\begin{proof} The claim follows by estimating the coefficients
  $\beta_k(\lambda)$ and $c_k(\lambda)$ in Eq.~\eqref{eq:def_bk_ck} using
  Lemma~\ref{lemma:help1}. Estimate for $c_k(\lambda)$ is given in
  Eq.~\eqref{LagrangeErrorEstimates}. We proceed to estimate the coefficients
  $\beta_k(\lambda)$. For $\lambda \in \sigma(A,M) \cap (0,\tilde{\Lambda} )$ Eqs.~\eqref{eq:smap}~and~\eqref{eq:Zexp} yield
\begin{equation*}
(I-P_{\tilde \Lambda}) \vec{x}_2 = f_{\tilde \Lambda} (\lambda) Z(\lambda) \vec{x}_1 =  \sum_{\substack{k \\ \mu_k > \tilde \Lambda}} \frac{\beta_k(\lambda)}{\mu_k - \lambda} \vec{v}_k.
\end{equation*}
By Lemma~\ref{RestrictionEstimateLemma}, properties of $P_{\tilde \Lambda}$, and normalisation of the eigenvectors of
problem \eqref{eq:AlgEigValP}, we have
\begin{equation} \label{BetaEstimates}
  C_{M}  = C_{M}  \vec{x}^T M \vec{x} \geq \| \vec{x} \|_{M_{22}}^2
  \geq  \sum_{\substack{k \\ \mu_k > \tilde \Lambda}} \left( \frac{\beta_k(\lambda)}{\mu_k - \lambda} \right)^2.
\end{equation}
Combining these with Eq.~\eqref{LagrangeErrorEstimates} gives the estimate
\begin{equation*}
  \begin{aligned}
    & \sum_{\substack{k \\ \mu_k > \tilde \Lambda}} \mu_k c_k(\lambda)^2 \beta_k(\lambda)^2 \\
    & \leq \sum_{\substack{k \\ \mu_k > \tilde \Lambda}} 
    \mu_k  \left ( \frac{ \beta_k(\lambda)}{\mu_k- \lambda } \right )^2 
    \frac{\Lambda^{2N}(\mu_k- \lambda)^2}{4^{2N-1}(\mu_k - \Lambda)^{2N+2}}  \\
    &
    \leq  4 \left( \frac{\Lambda}{4} \right)^{2N} \max_{\substack{k \\ \mu_k > \tilde \Lambda}}
    \left (  \frac{\mu_k (\mu_k- \lambda)^2}{(\mu_k - \Lambda)^{2N+2}}  \right ) 
    \sum_{\substack{k \\ \mu_k > \tilde \Lambda}} \left ( \frac{  \beta_k(\lambda)}{\mu_k- \lambda} \right )^2  \\
    & \leq 4 C_M \left (\frac{\Lambda}{4} \right )^{2N} \frac{\tilde \Lambda  (\tilde \Lambda - \lambda)^2}{(\tilde \Lambda-\Lambda)^{2N+2}} 
  \leq \frac{4  C_M \tilde \Lambda  (\tilde \Lambda - \lambda)^2}{(\tilde \Lambda - \Lambda)^{2}}  \cdot \left (\frac{L}{4 (\tilde \Lambda - \Lambda)} \right )^{2N} \\
&  \leq \frac{4  C_M (\gamma \Lambda - \lambda )}{(1 - 1/\gamma)^{2}}  \cdot \left (\frac{1}{4 (\gamma -  1)} \right )^{2N} 
= \frac{4 C_M \gamma^2 (\gamma \Lambda - \lambda )}{(\gamma - 1)^{2}}  \cdot \left (\frac{1}{4 (\gamma -  1)} \right )^{2N}.
  \end{aligned}
\end{equation*}
We used here the fact that the function
\begin{equation*}
x \mapsto \frac{x(x-\lambda)^2}{(x- \Lambda)^{2N + 2}} \quad \text{for} \quad x > 0, \quad x \neq \Lambda
\end{equation*}
is decreasing for $x > \Lambda$, and hence its maximum over
$[\tilde \Lambda,\infty)$ is attained at $x = \tilde \Lambda$. Finally, we
use $\lambda > 0$ to obtain the final estimate
\begin{equation*}
\frac{ | \lambda - \tilde{\lambda} |}{\lambda} 
\leq \frac{4 C_M  C(\lambda) \Lambda \gamma^3 }{(\gamma - 1)^2}  \, \left (\frac{1}{4 (\gamma -  1)} \right )^{2N} 
= C_M C(\lambda) \Lambda (4\gamma)^3   \, \left (\frac{1}{4 (\gamma -  1)} \right )^{2N+2}. 
\end{equation*}
This completes the proof.
\end{proof}

If we fix $\gamma > 5/4$ in Theorem \ref{th:FinalEstimate}, the right hand side
of Eq.~\eqref{FinalRelativeErrorEstimate} converges to zero as $N \to \infty$.
Observe that the set $\{ k \,: \, \mu_k > \gamma \Lambda \} = \emptyset$ for
$\gamma$ large enough for any $\Lambda > 0$. Then the sum in
inequality~\eqref{BetaEstimates} vanishes, and again $\lambda = \tilde \lambda$
follows.

\section{Computational cost}
\label{sec:cost}
The error estimate given in Theorem~\ref{th:FinalEstimate} allows one to choose
the values for $N$ and~$\gamma$ in an optimal way, depending on the target error
level and a model for the computational cost required to solve the eigenvalue
problem. Solving the smallest elements $\lambda \in \sigma_V(A,M)$ using the
method matrix $Q$ (as given in Eq.~\eqref{eq:ourmethodmatrix}) amounts to
solving $\hat A \vec{x} = \lambda \hat M \vec{x}$ in which
\begin{equation*}
\hat{A} = \begin{bmatrix}
A_{11} & A_{12} Q_{22} \\ 
Q_{22}^T A_{12}^T & Q_{22}^T A_{22} Q_{22}
\end{bmatrix}\quad \textrm{ and }
\hat{M} = \begin{bmatrix}
M_{11} & M_{12} Q_{22} \\ 
Q_{22}^T M_{12}^T & Q_{22}^T M_{22} Q_{22}
\end{bmatrix}.
\end{equation*}
By Section~\ref{sec:mmatrix},  $Q_{22} \in \mathbb{R}^{n_2 \times \mathop{dim}(V_2)}$, and hence
\begin{equation*}
Q_{22}^T A_{22} Q_{22} \in  \mathbb{R}^{\mathop{dim}(V_2) \times \mathop{dim}(V_2)} \text{ where } \mathop{dim}(V_2) \leq K(\gamma \Lambda) + n_\Gamma N.
\end{equation*} 

Denote the Cholesky factorisations of the matrices $\hat A$, $\hat M$ by $\hat A = R^T R$ and $\hat M = L^TL$, respectively.  Eigenvalues $\tilde \lambda \in \sigma_{V}(A,M) \cap (0,\Lambda)$ can be solved, e.g., by applying the Lanczos iteration to  
\begin{equation*}
L \hat A^{-1} L^T \vec{y} = \tilde \lambda^{-1} \vec{y} \quad \text{where} \quad \hat M = L^TL \quad \text{and} \quad \vec{y} = L \vec{x}.
\end{equation*}
This requires repeated multiplications by $L \hat A^{-1} L^T$ which can be
efficiently carried out using Cholesky factorisations. The factorisations should
be computed by taking advantage of the block structure: for example, by writing
$\hat{A} = R^T R$ so that
\begin{equation}
\label{eq:blockChol}
 R = \begin{bmatrix} R_{11} & R_{11}^{-T} \hat{A}_{12} \\ 0 & R_{22} \end{bmatrix}, \hat{A}_{11} = R_{11}^T R_{11}, \text{and } \hat{A}_{22} - \hat A_{12}^T \hat{A}_{11}^{-1} \hat A_{12} = R_{22}^T R_{22}
\end{equation}
where the Schur complement of $\hat A$ with respect to $A_{11}$ has also been Cholesky factorised. A similar formula can be used for $\hat{M}$ to produce $L$ in block form.  The matrix $R_{22}$ can be computed as a low-rank update to the factorisation of $\hat A_{22}$. This leads to 
\begin{equation*}
L R^{-1} R^{-T} L^T \vec{y} = \tilde \lambda^{-1} \vec{y}.
\end{equation*}
The cost of the matrix-vector multiplication by~$L R^{-1} R^{-T} L^T$ is of
lower order compared to computing the factorisations which we discuss next. The block structure can be used in the spirit of Eq.~\eqref{eq:blockChol} to recycle the factorisation of $A_{22}$ in the shift-and-invert Lanczos iteration.

For each version of problem~\eqref{eq:AlgEigValP}, one has to recompute the
Cholesky factorisations in Eq. \eqref{eq:blockChol}. The cost of factorising
$A_{11}$ does not depend on the choice of $V_2$. Hence, we only model the cost
of computing the Cholesky factorisation for the Schur complement $\hat{A}_{22} -
\hat A_{12}^T \hat{A}_{11}^{-1} \hat A_{12}$. Depending on the underlying
problem, the Schur complement can be sparse or dense. Thus, the cost of
computing the factorisation is modelled as proportional to the $r$th power of
$\mathop{dim}(V_2)$ as
\begin{equation}
\label{eq:cost}
\mathop{cost}(\gamma, N) := \left( K(\gamma \Lambda) + n_{\Gamma} N \right)^r
\end{equation}
where $K(\ell)$ is as defined in Eq.~\eqref{eq:ngKl} and the parameter $r \in
[0,3]$ depends on the sparsity of the Schur complement.

\subsection{Optimisation of $N$ and $\gamma$}

A typical application for CPI is the solution of eigenvalues for the Dirichlet
Laplacian in $\Omega \subset \mathbb{R}^d$ using the finite element method. In
this case, an asymptotically accurate description for $K(l)$ is given by the
Weyl law~\cite[Ch. 8]{Roe:Elliptic:1988} as $K( l ) \approx C(d)
\mathop{vol}(\Omega_2) l^{d/2}$ where $C(d) = (2\pi)^{-d} \mathop{vol}B_d$ and
$B_d$ is a $d$-dimensional unit ball. Motivated by Theorem
\ref{th:FinalEstimate}, we define a normalised tolerance function:
\begin{equation}
\label{eq:tol}
\mathop{ntol}(\gamma,N) := \gamma^3 \left( \frac{1}{4(\gamma-1)} \right)^{2N+2}.
\end{equation} 
When $\gamma$ and $N$ are chosen such that $\mathop{ntol}(\gamma,N) \leq \eta$,
the relative error in eigenvalues $\lambda \in (0,\Lambda)$ satisfies
\begin{equation*}
\frac{|\lambda - \tilde \lambda |}{\lambda} \leq 64 C_M \Lambda C(\lambda) \eta
\end{equation*}
 by Theorem \ref{th:FinalEstimate}.
 An optimal value combination for parameters $N$ and $\gamma$ for a normalised
 error level $\eta$ is obtained by minimising the cost function \eqref{eq:cost}
 under the constraint $ntol(\gamma,N) = \eta$.
\begin{theorem}
\label{th:optimalparameters}

Let $\eta>0$, and let $ntol(\gamma,N)$ be as given in Eq.~\eqref{eq:tol}.
Define the computational cost model function as
\begin{equation*}
g(\gamma,N) := \left( C(d) \mathop{vol}(\Omega_2) ( \gamma \Lambda)^{d/2} +  n_\Gamma N \right)^r
\end{equation*}
where $C(d) = (2\pi)^{-d} \mathop{vol}B_d$, and $B_d$ is a $d$-dimensional unit
ball. Then the computational cost model function is minimised under the
constraint $ntol(N,\gamma) = \eta$ by choosing $N(\gamma)$ and $\gamma$ such
that
\begin{equation}
\label{eq:gamma2N}
  N(\gamma) =  \frac{d C(d) \Lambda^{d/2} \mathop{vol}(\Omega_2)}{2n_\Gamma} \gamma^{d/2-1}(\gamma - 1) \ln{4(\gamma - 1)} - 2.
\end{equation}
and
\begin{equation}
\label{eq:tol2gamma}
\gamma^3 \left( \frac{1}{4(\gamma-1)} \right)^{2N(\gamma)+2} = \eta.
\end{equation}
\end{theorem}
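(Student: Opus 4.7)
The plan is to reduce this to an unconstrained one-variable calculus problem and then apply first-order optimality. Since $r > 0$ and $x \mapsto x^r$ is strictly increasing on $(0,\infty)$, the minimiser of $g(\gamma,N)$ under the constraint coincides with the minimiser of the base function
\begin{equation*}
  h(\gamma,N) := C(d)\,\text{vol}(\Omega_2)\,(\gamma\Lambda)^{d/2} + n_\Gamma N,
\end{equation*}
so the exponent $r$ will drop out of the optimality condition (which is consistent with the stated result making no reference to $r$). Therefore I would carry out the minimisation of $h$ subject to $\text{ntol}(\gamma,N) = \eta$.

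The cleanest route is implicit differentiation. First, take logarithms in the constraint to obtain
\begin{equation*}
  3\ln\gamma - (2N+2)\ln[4(\gamma-1)] = \ln\eta,
\end{equation*}
which defines $N$ as a smooth function $N(\gamma)$ for $\gamma > 5/4$ (so that $\ln[4(\gamma-1)] > 0$). Differentiating this identity in $\gamma$ gives
\begin{equation*}
  2N'(\gamma)\,\ln[4(\gamma-1)] \;=\; \frac{3}{\gamma} - \frac{2N+2}{\gamma-1},
\end{equation*}
which will be identified as equation (5.4) in disguise and will later be used to eliminate $N'(\gamma)$.

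Next, I would impose $\frac{d}{d\gamma} h(\gamma, N(\gamma)) = 0$. Computing the partial derivatives yields
\begin{equation*}
  \tfrac{d}{2}\,C(d)\,\text{vol}(\Omega_2)\,\Lambda^{d/2}\,\gamma^{d/2-1} + n_\Gamma N'(\gamma) = 0,
\end{equation*}
so
\begin{equation*}
  N'(\gamma) = -\,\frac{d\,C(d)\,\Lambda^{d/2}\,\text{vol}(\Omega_2)}{2n_\Gamma}\,\gamma^{d/2-1}.
\end{equation*}
Substituting this expression for $N'(\gamma)$ into the implicit-differentiation relation and then solving the resulting linear equation for $N$ produces a closed-form relation of exactly the shape of~(5.3): the factor $\gamma^{d/2-1}(\gamma-1)\ln[4(\gamma-1)]$ arises by multiplying through by $(\gamma-1)/\ln[4(\gamma-1)]$, and the additive constant comes out of the $3/\gamma$ and the $-(2N+2)/(\gamma-1)$ terms. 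This yields the first equation of the theorem; the second equation is simply the original constraint restated with $N = N(\gamma)$.

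The main obstacle I anticipate is the algebraic simplification in the last step: the exact Lagrange equation produces an additive $\gamma$-dependent remainder of the form $\tfrac{3(\gamma-1)}{2\gamma}-1$, so to match the stated constant $-2$ in (5.3) I would either absorb this remainder via the constraint identity or note that it is a lower-order correction (bounded by a constant for all $\gamma > 1$) compared with the leading term. I would also verify second-order conditions: $h$ is convex in $N$ and the constraint curve $N(\gamma)$ is convex in $\gamma$ on $(5/4,\infty)$, so the critical point is indeed the unique minimiser. A brief remark on the feasibility domain (one needs $N(\gamma) \geq 0$, so $\gamma$ cannot be taken too close to $5/4$) would close out the argument.
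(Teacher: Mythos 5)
Your approach is exactly the paper's: the authors give only a one-line justification ("minimise $C(d)\mathop{vol}(\Omega_2)(\gamma\Lambda)^{d/2}+n_\Gamma N$ under the constraint $ntol(\gamma,N)=\eta$ by Lagrange multipliers"), and your implicit-differentiation argument is the single-constraint form of that, with the correct observation that $r$ drops out. Your computation is right, and the "obstacle" you flag at the end is genuine rather than a defect of your argument: eliminating $N'(\gamma)$ exactly yields $N=\frac{dC(d)\Lambda^{d/2}\mathop{vol}(\Omega_2)}{2n_\Gamma}\gamma^{d/2-1}(\gamma-1)\ln[4(\gamma-1)]+\frac{3(\gamma-1)}{2\gamma}-1$, and the remainder $\frac{3(\gamma-1)}{2\gamma}-1=\frac{\gamma-3}{2\gamma}\in(-1,\tfrac12)$ cannot be massaged into the stated constant $-2$ by the constraint identity; the published formula~\eqref{eq:gamma2N} is at best an $O(1)$ approximation of the exact stationarity condition (harmless in practice since $N$ is rounded up, but not an identity). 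So do not spend effort trying to "absorb" the remainder exactly; state the exact first-order condition and note that \eqref{eq:gamma2N} replaces the bounded term $\frac{\gamma-3}{2\gamma}$ by a constant.
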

\noindent This theorem follows by minimising $C(d) \mathop{vol}(\Omega_2) (
\gamma \Lambda)^{d/2} + n_\Gamma N$ under the constraint $ntol(\gamma,N) = \eta$
by using the method of Lagrange multipliers. Observe that
Eqs.~\eqref{eq:gamma2N}~and~\eqref{eq:tol2gamma} do not depend on the exponent
$r$ in the approximate cost function $g(\gamma,N)$.

\begin{remark} From Eqs.~\eqref{eq:gamma2N}~and~\eqref{eq:tol2gamma} one can
  numerically solve $N$ and $\gamma$ as a function of $\eta$. The resulting $N$
  is typically not an integer but it can be rounded up while preserving the
  desired normalised tolerance $\eta$. For $d=2$, the graphical approximation
  given in Fig.~\ref{fig:nomogram}~can be used. Denote
\begin{equation*}
\tilde \eta := \eta^{\frac{2\pi n_{\Gamma}}{\Lambda}} \quad \textrm{ and } \quad \tilde{N}(\gamma) := ( \gamma-1) \ln{ 4(\gamma-1)}.
\end{equation*}
Since $27/64 \leq \gamma^{3}(4 ( \gamma - 1) )^{-2} \leq 1/2$ for $\gamma \in
[2,5]$ we have 
\begin{equation*}
\tilde \eta(\gamma) \approx 2 \left( \frac{1}{4 ( \gamma - 1) }\right)^{\tilde{N}(\gamma)}.
\end{equation*}
Using this approximation to determine $\gamma$ eliminates $n_\Gamma$ and
$\Lambda$ from the graphical procedure. The value for $N(\gamma)$ is recovered
from $N(\gamma) = \frac{\Lambda }{2\pi n_\Gamma} \tilde{N}(\gamma)$ and rounded
up.
\begin{figure}[h]
  \centering
  \includegraphics[width=.6\textwidth]{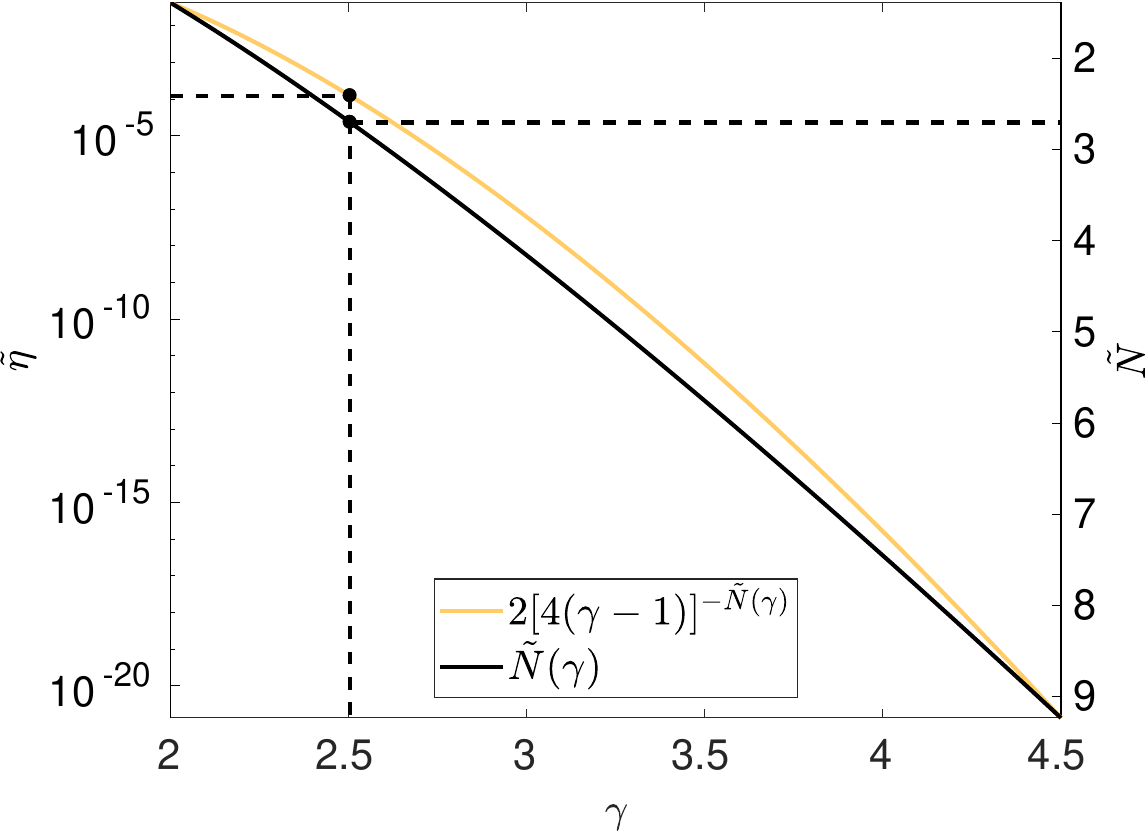}
  \caption{Graphical tool for finding the auxiliary parameter value
    $\tilde{N}(\gamma)$ for a given $\tilde{\eta}$ in the case $d=2$ for the Laplace equation.}
  \label{fig:nomogram}
\end{figure}
\end{remark}

\section{Dimension reduction}
\label{sec:dim_red}
It usually turns out that the space $V_2$ is
excessively large for the chosen error level, and it can be replaced
by $\tilde{V}_2 \subset V_2$ of considerably smaller dimension while
maintaining desired accuracy. We proceed to discuss how such
$\tilde{V}_2$ can be constructed.

For ease of presentation, assume now that the eigenvalues $\{ \mu_k \}$ in
Eq.~\eqref{ExteriorEVP} are given in non-decreasing order, and write $K =
K(\tilde \Lambda) $ as in Eq.~\eqref{eq:ngKl}. Recall the definition of $B$ from
Section~\ref{sec:mmatrix}
\begin{equation}
  \label{eq:defB}
  B =  \begin{bmatrix} \vec{v}_1,\;  \ldots  ,\; \vec{v}_K,\; (I-P_{\tilde
  \Lambda}) \vec q_{11},\;  \ldots ,\; (I-P_{\tilde
  \Lambda}) \vec q_{Nr} \end{bmatrix}
\end{equation}
which satisfies $V_2 = \mathop{range}(B)$. Note that the matrix $B$ may
have a non-trivial null space. Hence, the $Q_{22}$-block in the method matrix
$Q$ defined by Eq.~\eqref{eq:ourmethodmatrix} is obtained by computing a basis
for $\mathop{range}(B)$ using, e.g., SVD of $B$.

Denoting a low-rank approximation of $B$ by $\tilde
B$ with $\tilde{V}_2 := \mathop{range}(\tilde B)$, the corresponding method
matrix for the \emph{dimension reduced version of CPI} is given by
\begin{equation} \label{eq:redSubSp}
\tilde Q := \begin{bmatrix} I & 0 \\ 0 & \tilde Q_{22} \end{bmatrix} 
\textrm{ where } \mathop{ker}(\tilde Q_{22}) = \{0\} \textrm{ and } \mathop{range}(\tilde Q_{22}) = \mathop{range}(\tilde B).
\end{equation}
Further, let $\tilde V := \mathop{range}(\tilde Q)$. We proceed to give an error estimate for the dimension reduced version of CPI.
\begin{lemma}  \label{DimRedLemma}
Let $(\lambda,\vec{x}) \in (0,\Lambda) \times \mathbb{R}^{n} \setminus
\{ 0 \}$ be an eigenpair of Eq.~\eqref{eq:AlgEigValP} with
$\vec{x} = \begin{bmatrix} \vec{x}_1 & \vec{x}_2 \end{bmatrix}^T$
according to the standard splitting. Let $\{ \vec p_1, \ldots \vec p_r
\} \subset \mathbb{R}^{n_2}$ be vectors satisfying
Eq.~\eqref{pvectors}.  Let $B \in \mathbb{R}^{n_2 \times (K+Nr)}$
be as defined in Eq.~\eqref{eq:defB} and
\begin{equation} \label{eq:explicit_u2}
  \vec{u}_2 := P_{\tilde{\Lambda}} \vec{x}_2 +\sum_{i=1}^N \ell_i(\lambda) f_{\tilde \Lambda}(\xi_i) Z(\lambda) \vec x_1
\quad \text{ where } \quad Z(\lambda) = \lambda M_{21} - A_{21}.
\end{equation}
Define the dimension reduced method matrix and the corresponding
subspace by Eq.~\eqref{eq:redSubSp}.
Then there exists $\tilde{\lambda} \in \sigma_{\tilde V}(A,M)$ such
that
\begin{equation}
 \label{eq:bestapprox_tol} 
 \frac{| \lambda - \tilde{\lambda} |}{\lambda} \leq
 2 C_M C(\lambda) \Lambda  (4\gamma)^3   \, \left (\frac{1}{4 (\gamma -  1)} \right )^{2N+2} + 
 2 \left \| R \left ( B - \tilde B \right ) \right \|^2 \min_{B  
   \hat{\vec{\alpha}} = \vec{u}_2}{\left \| \hat{\vec{\alpha}} \right \|^2}
\end{equation} 
where $A_{22} = R^T R$ is the Cholesky factorisation of $A_{22}$, and
the constants $N,\gamma,C_M$, and $C(\lambda)$ are as in
Theorem~\ref{th:FinalEstimate}.
\end{lemma}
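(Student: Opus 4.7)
The plan is to start from Proposition~\ref{prop:eigen_proj_estimate} applied to the subspace $\tilde V = \mathop{range}(\tilde Q)$. Since $\tilde V$ has the same block structure as $V$ in Eq.~\eqref{eq:subspace_V} (with $\tilde V_2$ replacing $V_2$), choosing the first block of the trial vector to equal $\vec{x}_1$ collapses the estimate to
\begin{equation*}
\frac{|\lambda - \tilde{\lambda}|}{\lambda} \leq C(\lambda) \min_{\tilde{\vec{v}}_2 \in \tilde V_2} \|\vec{x}_2 - \tilde{\vec{v}}_2\|_{A_{22}}^2,
\end{equation*}
exactly as in the derivation of Eq.~\eqref{eq:rel_error}.

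Next, I would insert $\vec{u}_2$ as a bridge between $\vec{x}_2$ and $\tilde V_2$. Since $P_{\tilde\Lambda}\vec{x}_2$ lies in the span of the columns of $B_1$ and each summand $f_{\tilde\Lambda}(\xi_i)Z(\lambda)\vec{x}_1$ lies in $\mathop{span}\{(I-P_{\tilde\Lambda})\vec{q}_{i1},\dots,(I-P_{\tilde\Lambda})\vec{q}_{ir}\}$ by Eqs.~\eqref{SampleVectors}--\eqref{pvectors}, the vector $\vec{u}_2$ defined in Eq.~\eqref{eq:explicit_u2} lies in $\mathop{range}(B)$, so there exists $\hat{\vec{\alpha}}$ with $B\hat{\vec{\alpha}}=\vec{u}_2$ (generally not unique). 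For any such $\hat{\vec{\alpha}}$ the vector $\tilde B \hat{\vec{\alpha}}$ lies in $\tilde V_2$. The triangle inequality followed by $(a+b)^2 \leq 2a^2+2b^2$ gives
\begin{equation*}
\|\vec{x}_2 - \tilde B\hat{\vec{\alpha}}\|_{A_{22}}^2 \leq 2\|\vec{x}_2 - \vec{u}_2\|_{A_{22}}^2 + 2\|(B-\tilde B)\hat{\vec{\alpha}}\|_{A_{22}}^2.
\end{equation*}

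The first term on the right is precisely the squared norm that appears inside Eq.~\eqref{AlmostFinalRelativeErrorEstimate}: a direct computation shows $\vec{x}_2 - \vec{u}_2 = \bigl(f_{\tilde\Lambda}(\lambda) - \sum_{i=1}^N \ell_i(\lambda) f_{\tilde\Lambda}(\xi_i)\bigr) Z(\lambda)\vec{x}_1$, and the estimation chain performed in the proof of Theorem~\ref{th:FinalEstimate} already bounds this by $C_M \Lambda (4\gamma)^3 (4(\gamma-1))^{-2N-2}$. For the second term I would use the Cholesky factorisation $A_{22} = R^T R$ to write
\begin{equation*}
\|(B-\tilde B)\hat{\vec{\alpha}}\|_{A_{22}} = \|R(B-\tilde B)\hat{\vec{\alpha}}\|_2 \leq \|R(B-\tilde B)\|\,\|\hat{\vec{\alpha}}\|_2,
\end{equation*}
and then minimise over $\hat{\vec{\alpha}}$ satisfying $B\hat{\vec{\alpha}}=\vec{u}_2$ to obtain the stated factor.

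Combining the two bounds with the $C(\lambda)$ prefactor from Proposition~\ref{prop:eigen_proj_estimate} yields Eq.~\eqref{eq:bestapprox_tol}. The only non-routine step is recognising that $\vec{u}_2$ from Eq.~\eqref{eq:explicit_u2} is exactly the vector that realises the error bound from Theorem~\ref{th:FinalEstimate} and simultaneously lies in $\mathop{range}(B)$, so that it genuinely serves as a bridge between the two approximation tasks; once this is established, the remainder is triangle inequality, the Cholesky identity for the $A_{22}$-norm, and a standard operator-norm bound.
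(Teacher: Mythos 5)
Your proposal is correct and follows essentially the same route as the paper: both identify $\vec{u}_2=B\vec{\alpha}$ as the vector realising the Theorem~\ref{th:FinalEstimate} bound, split $\vec{x}_2-\tilde B\vec{\alpha}=(\vec{x}_2-\vec{u}_2)+(B-\tilde B)\vec{\alpha}$, apply the triangle inequality with the Cholesky identity $\|\cdot\|_{A_{22}}=\|R\cdot\|_2$, square (yielding the factors of $2$), and invoke Proposition~\ref{prop:eigen_proj_estimate} with $\tilde V$ in place of $V$. The paper's proof is just slightly terser, leaving the $(a+b)^2\leq 2a^2+2b^2$ step and the membership $\vec{u}_2\in\mathop{range}(B)$ implicit.
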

\noindent In computations, one would choose an optimal combination of $N$
and $\gamma$ as described in Section~\ref{sec:cost} for the
untruncated version of CPI.
\begin{proof}
The original error estimate~\eqref{FinalRelativeErrorEstimate} in
Theorem~\ref{th:FinalEstimate} was derived by implicitly constructing
$\vec{u}_2$ in Eqs.~\eqref{eq:v2red1}~and~\eqref{eq:w2def} in order to
bound the right hand side of Eq.~\eqref{eq:rel_error}, i.e.,
\begin{equation*}
\min_{\vec{v}_2 \in V_2} \| \vec{x}_2 - \vec{v}_2 \|_{A_{22}} \leq  \| \vec{x}_2 - \vec{u}_2 \|_{A_{22}}.
\end{equation*}
In that theorem, the latter term is further estimated by
\begin{equation*}
\| \vec{x}_2 - \vec{u}_2 \|_{A_{22}}^2 \leq C_M \Lambda (4\gamma)^3  \, \left (\frac{1}{4 (\gamma -  1)} \right )^{2N+2}.
\end{equation*}
The proof of the current claim follows from this by a perturbation
argument. Let $\vec \alpha$ be such that $\vec{u}_2 = B \vec
\alpha$.  Since $\tilde{V}_2 = \mathop{range}(\tilde B)$, we have
\begin{equation*}
\begin{aligned}
  &  \min_{\vec{v}_2 \in \tilde{V}_2} \| \vec{x}_2 - \vec{v}_2 \|_{A_{22}}
  \leq \| \vec{x}_2 - \tilde{B} \vec{\alpha} \|_{A_{22}} 
  = \| \vec{x}_2 - B \vec{\alpha} +  (B - \tilde{B}) \vec{\alpha}  \|_{A_{22}} \\
  & \leq \|
  \vec{x}_2 - \vec{u}_2 \|_{A_{22}} + \| (B - \tilde{B})  
  \vec{\alpha} \|_{A_{22}}
\leq \|
  \vec{x}_2 - \vec{u}_2 \|_{A_{22}} + \| R (B - \tilde{B}) \|  \|
  \vec{\alpha} \|.
\end{aligned}
\end{equation*}
The claim follows by squaring this estimate and applying
Proposition~\ref{prop:eigen_proj_estimate} with $\tilde V$ in place of
$V$.
\end{proof}

To make practical use of Lemma~\ref{DimRedLemma} to achieve a given
target level for the relative eigenvalue error, we start by bounding
the first term in Eq.~\eqref{eq:bestapprox_tol} by  choosing the
parameter value combination $N$, $\gamma$ using Eqs.~\eqref{eq:gamma2N}
and~\eqref{eq:tol2gamma}. It remains to bound the second term in
Eq.~\eqref{eq:bestapprox_tol} so that
\begin{equation*}
  \left \| R \left ( B - \tilde{B} \right ) \right \|
  \min_{ B  \hat{\vec{\alpha}} = \vec{u}_2}{\left \| \hat{\vec{\alpha}} \right \|}
\leq \| R ( B - \tilde{B} ) \| \| \vec{\alpha} \| < \sqrt{tol}
\end{equation*}
for a given truncation error level $tol > 0$. A vector $\vec{\alpha}$ satisfying $Q_{22} \vec{\alpha} = \vec{u}_2$ and upper bound for $\| \vec{\alpha} \|$ are given below. Given such $\vec{\alpha}$, 
we then use the SVD $R B = \sum_{i=1}^{n_2} \sigma_i \vec{u}_i
\vec{w}^T_i$ (where $\sigma_i$~are ordered in non-increasing order) to
construct the lowest rank $\tilde{B}$  satisfying $\| R ( B - \tilde{B} ) \| \| \vec{\alpha} \| < \sqrt{tol}$ as
\begin{equation*}
\tilde B = R^{-1} \sum_{i=1}^{K_c} \sigma_i \vec{u}_i \vec{w}^T_i \quad \text{ where  } \quad K_c := \max \{ i \; | \sigma_i 
> \sqrt{tol} /  \|  \vec{\alpha} \|  \}.
\end{equation*}
The method matrix block $\tilde{Q}_{22}$ for the dimension reduced CPI, defined in Eq.~\eqref{eq:redSubSp}, is obtained as 
\begin{equation} \label{eq:Bcut}
\tilde{Q}_{22} :=   
R^{-1} \begin{bmatrix} \vec{u}_1 & \vec{u}_2 & \ldots & \vec{u}_{K_c}  \end{bmatrix}
\end{equation}
where the column vectors of $\tilde{Q}_{22}$ are orthonormal in the $A_{22}$-inner product. Because $\vec \alpha \neq \vec{0}$, the number $K_c = K_c(\left \|
\vec{\alpha} \right \|, tol)$ is always defined, and the truncation
error level $tol > 0$ can be chosen arbitrarily small. 

We make use of the Lebesgue constant $\Lambda_N$ for Chebyshev
interpolation points of $(0,\tilde \Lambda)$ (see, e.g.,
~\cite{Brutman:1978}), given by
\begin{equation} \label{eq:Lebesgue}
  \Lambda_N := \max_{ t \in (0,\tilde \Lambda)}  \sum_{i=1}^N \left | \ell_i(t) \right | 
  \quad \textrm{ that satisfy } \quad \Lambda_N =  \frac{2}{\pi} \log{N} + \mathcal{O}(1).
\end{equation}
\begin{lemma} \label{alphaLemma}
Make the same assumptions and use the same notation as in
  Lemma~\ref{DimRedLemma}.
Then there exists $\vec \alpha$ satisfying $\vec{u}_2 = Q_{22}
\vec{\alpha}$, such that
\begin{equation}
\label{eq:alpha_bound1}
\| \vec{\alpha} \|^2 \leq 1 + \Lambda_N^2 \| \vec{\theta} \|^2 
\end{equation} 
where the coefficient vector  $\vec{\theta} := \begin{bmatrix} \theta_1 & \theta_2 & \ldots & \theta_r 
\end{bmatrix}^T$ satisfies
\begin{equation} \label{eq:alpha}
Z(\lambda) \vec{x}_1 = \sum_{j=1}^r \theta_j \vec{p}_j.
\end{equation}
\end{lemma}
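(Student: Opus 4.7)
The plan is to construct a candidate $\vec{\alpha}$ explicitly by decomposing $\vec{u}_2$ along the two natural blocks of $B = [B_1 \; B_2]$ from Eq.~\eqref{eq:defB}, and then to bound its Euclidean norm summand by summand. First I would substitute the expansion $Z(\lambda)\vec{x}_1 = \sum_{j=1}^{r} \theta_j \vec{p}_j$ from Eq.~\eqref{eq:alpha} into the formula~\eqref{eq:explicit_u2} for $\vec{u}_2$, and use the sample vector equation $(A_{22} - \xi_i M_{22}) \vec{q}_{ij} = \vec{p}_j$ from Eq.~\eqref{SampleVectors} together with linearity of the resolvent to rewrite
\begin{equation*}
\vec{u}_2 = P_{\tilde \Lambda} \vec{x}_2 + \sum_{i=1}^{N} \sum_{j=1}^{r} \ell_i(\lambda)\, \theta_j\, (I - P_{\tilde \Lambda}) \vec{q}_{ij}.
\end{equation*}

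Since the columns of $B_1$ are the $M_{22}$-orthonormal eigenvectors $\vec{v}_1,\ldots,\vec{v}_K$, the spectral projection has the expansion $P_{\tilde \Lambda} \vec{x}_2 = \sum_{k=1}^{K} \gamma_k \vec{v}_k$ with $\gamma_k := \vec{v}_k^T M_{22} \vec{x}_2$. The columns of $B_2$ are exactly the vectors $(I-P_{\tilde \Lambda}) \vec{q}_{ij}$ in the fixed ordering given after Eq.~\eqref{eq:defB}, so setting
\begin{equation*}
\vec{\alpha} := \begin{bmatrix} (\gamma_k)_{k=1}^{K} \\ (\ell_i(\lambda) \theta_j)_{i,j} \end{bmatrix}
\end{equation*}
gives $B \vec{\alpha} = \vec{u}_2$ by construction, hence a feasible point for the minimisation in Eq.~\eqref{eq:bestapprox_tol}.

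I would then estimate the two sub-blocks of $\vec{\alpha}$ independently. For the eigenvector block, $M_{22}$-orthonormality of $\{\vec{v}_k\}$ gives
\begin{equation*}
\sum_{k=1}^{K} \gamma_k^2 = \| P_{\tilde \Lambda} \vec{x}_2 \|_{M_{22}}^2 \leq \| \vec{x}_2 \|_{M_{22}}^2 \leq \vec{x}^T M \vec{x} = 1,
\end{equation*}
where the middle inequality uses that $P_{\tilde \Lambda}$ is an $M_{22}$-orthogonal projection and the last one follows from the normalisation of $\vec{x}$ together with the block positivity of $M$ (this is the only place where a constant from Lemma~\ref{RestrictionEstimateLemma} may need to be tracked). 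For the interpolation block, separating the sums over $i$ and $j$ yields
\begin{equation*}
\sum_{i=1}^{N} \sum_{j=1}^{r} \ell_i(\lambda)^2 \theta_j^2 = \| \vec{\theta} \|^2 \sum_{i=1}^{N} \ell_i(\lambda)^2 \leq \| \vec{\theta} \|^2 \Bigl( \sum_{i=1}^{N} | \ell_i(\lambda) | \Bigr)^2 \leq \Lambda_N^2 \| \vec{\theta} \|^2
\end{equation*}
by the elementary inequality $\sum a_i^2 \leq (\sum |a_i|)^2$ and the definition of the Lebesgue constant in Eq.~\eqref{eq:Lebesgue}. Adding the two bounds delivers $\| \vec{\alpha} \|^2 \leq 1 + \Lambda_N^2 \| \vec{\theta} \|^2$.

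The main obstacle I foresee is simply the first bound: passing from $\| \vec{x}_2 \|_{M_{22}}^2$ to the stated upper bound $1$ is immediate when $M$ is block diagonal, but in the genuinely coupled case an extra factor arising from $C_M$ of Lemma~\ref{RestrictionEstimateLemma} has to be either absorbed into a constant or tracked. The remainder of the argument is bookkeeping: identifying the correct index ordering in $B_2$ and applying the standard $\ell^2$-to-Lebesgue-constant estimate for the Lagrange basis at the evaluation point $\lambda$.
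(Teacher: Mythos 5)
Your proposal is correct and follows essentially the same route as the paper's proof: the same explicit $\vec{\alpha}$ with blocks $\tau_k = \vec{v}_k^T M_{22}\vec{x}_2$ and $\ell_i(\lambda)\theta_j$, the same $M_{22}$-orthonormality bound for the eigenvector block, and the same estimate $\sum_{i=1}^N \ell_i(\lambda)^2 \le \Lambda_N^2$ for the interpolation block. The caveat you flag --- that passing from $\vec{x}^T M\vec{x}=1$ to $\|\vec{x}_2\|_{M_{22}}\le 1$ genuinely requires the constant $C_M$ of Lemma~\ref{RestrictionEstimateLemma} when $M_{12}\neq 0$ --- is well founded, and the paper's own proof asserts that bound without tracking this constant.
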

\begin{proof} 
Define the coefficients $\tau_k$ by the expansion
\begin{equation} 
  \label{eq:gamma}
  P_{\tilde \Lambda} \vec{x}_2 = \sum_{k=1}^K \tau_k \vec v_k \quad  \textrm{ so that  } 
  \quad  \| P_{\tilde \Lambda} \vec{x}_2 \|^2_{M_{22}} = \sum_{k=1}^K \tau_k^2 \leq 1.
\end{equation}
Indeed, this holds by the $M_{22}$-orthogonality of $\{ \vec v_k \}$
(see Eq.~\eqref{eq:Pdef}) and the normalisation $\vec{x}^T M \vec{x} =
1$ implying $\| \vec x_2 \|_{M_{22}} \leq 1$ and hence $\| P_{\tilde
  \Lambda } \vec{x}_2\|_{M_{22}} \leq 1$.

Define $\alpha_{i,j} := \ell_i(\lambda) \theta_j$ where $i=1,\ldots N$
and $i=j,\ldots r$. Then $\vec{u}_2$ in Eq.~\eqref{eq:explicit_u2} can
be written in the form
\begin{equation*}
\vec{u}_2 = Q_{22} \vec{ \alpha } = \sum_{k=1}^K \tau_k \vec{v}_k +
\sum_{i=1}^N \sum_{j=1}^r \alpha_{i,j} (I-P_{\tilde \Lambda}) \vec{q}_{i,j} 
\end{equation*}
by Eqs.~\eqref{SampleVectors}--\eqref{eq:defB}.  So, we can choose
\begin{equation*}
\vec{\alpha} :=
  \begin{bmatrix} \tau_1 & \ldots & \tau_K & \alpha_{1,1} & \ldots & \alpha_{N,1} & \ldots & \alpha_{1,r} & \ldots & \alpha_{N,r} \end{bmatrix}^T.
\end{equation*}
By Eq.~\eqref{eq:gamma} and the definition of $\alpha_{i,j}$ we have
\begin{equation*}
\| \vec{\alpha} \|^2 = \sum_{k=1}^K \tau_k^2 + \sum_{i=1}^N
\sum_{j=1}^r \alpha_{i,j}^2 \leq 1 + \sum_{i=1}^N \ell_i(\lambda)^2
\sum_{j=1}^r \theta_j^2.
\end{equation*} 
Observing that $\sum_{i=1}^N \ell_i(\lambda)^2 \leq \Lambda_N^2$
completes the proof.
\end{proof}

The magnitude of $\| \vec{\theta} \|$ in Eq.~\eqref{eq:alpha_bound1} is
estimated by choosing the vectors $\{ \vec p_1, \ldots , \vec{p}_r \}$
in a way that $\theta_j$'s can be explicitly solved from the
Eq.~\eqref{eq:alpha}:
\begin{lemma} \label{alphaCorollary}
  Make the same assumptions and use the same notation as in
  Lemma~\ref{DimRedLemma}.  Assume that the matrices $A_{21}$,
  $M_{21}$ are ordered so that only their first $r_1$, $r_2$ column
  vectors are nonvanishing, respectively.
Let
\begin{equation}
\label{eq:pset}
\{ \vec p_1, \ldots , \vec{p}_r \} = \{ A_{21} \vec e_1, \ldots, A_{21}  \vec e_{r_1} ,  M_{21}  \vec e_1, \ldots,  M_{21} \vec e_{r_2} \}.
\end{equation}
Then there exists a coefficient vector $\vec{\theta}
:= \begin{bmatrix} \theta_1 & \theta_2 & \ldots & \theta_r
\end{bmatrix}^T$  such that $Z(\lambda) \vec{x}_1 =
\sum_{j=1}^r \theta_j \vec p_j$ and
\begin{equation*}
\| \vec \theta\|^2 \leq  (1 + \lambda^2) \|M^{-1}\|. 
\end{equation*}
\end{lemma}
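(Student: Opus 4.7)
\textbf{Proof plan for Lemma \ref{alphaCorollary}.} The plan is to exploit the column-sparsity hypothesis on $A_{21}$ and $M_{21}$ in order to write $Z(\lambda)\vec{x}_1$ explicitly in terms of the $\vec{p}_j$'s, and then to bound the resulting coefficients by using that $\vec{x}$ is $M$-normalised.

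First I would decompose $\vec{x}_1$ in the standard basis of $\mathbb{R}^{n_1}$, writing $\vec{x}_1 = \sum_{i=1}^{n_1} (\vec{x}_1)_i \vec{e}_i$. Using $Z(\lambda) = \lambda M_{21} - A_{21}$, the hypothesis that only the first $r_1$ columns of $A_{21}$ and the first $r_2$ columns of $M_{21}$ are nonvanishing yields
\begin{equation*}
Z(\lambda) \vec{x}_1 = -\sum_{i=1}^{r_1} (\vec{x}_1)_i A_{21} \vec{e}_i + \lambda \sum_{i=1}^{r_2} (\vec{x}_1)_i M_{21} \vec{e}_i .
\end{equation*}
This gives an explicit candidate for $\vec{\theta}$: namely $\theta_j = -(\vec{x}_1)_j$ for $j = 1,\ldots,r_1$ and $\theta_{r_1 + j} = \lambda (\vec{x}_1)_j$ for $j = 1,\ldots,r_2$, which matches the ordering of $\{ \vec{p}_j \}$ prescribed in Eq.~\eqref{eq:pset}.

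Second, I would compute the norm of this $\vec{\theta}$ and estimate it termwise:
\begin{equation*}
\| \vec{\theta} \|^2 = \sum_{i=1}^{r_1} (\vec{x}_1)_i^2 + \lambda^2 \sum_{i=1}^{r_2} (\vec{x}_1)_i^2 \le (1 + \lambda^2) \| \vec{x}_1 \|^2 \le (1 + \lambda^2) \| \vec{x} \|^2 ,
\end{equation*}
where the last inequality uses that $\vec{x}_1$ is the restriction of $\vec{x}$ to its first $n_1$ components under the standard splitting. Finally, the $M$-normalisation $\vec{x}^T M \vec{x} = 1$ combined with the standard bound $\vec{x}^T M \vec{x} \geq \lambda_{\min}(M) \| \vec{x} \|^2 = \| M^{-1} \|^{-1} \| \vec{x} \|^2$ gives $\| \vec{x} \|^2 \le \| M^{-1} \|$, and putting everything together yields the claim.

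There is no real obstacle here; the only point that requires a little care is to make sure the indexing of $\vec{\theta}$ is consistent with the ordering of $\{ \vec{p}_j \}$ in Eq.~\eqref{eq:pset}, and that the estimate $\sum_{i=1}^{r_k} (\vec{x}_1)_i^2 \leq \| \vec{x}_1 \|^2$ is applied uniformly for $k = 1, 2$ so that the factor $(1 + \lambda^2)$ comes out cleanly rather than as $\max(1, \lambda^2)$ with some combinatorial bookkeeping.
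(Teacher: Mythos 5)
Your proposal is correct and follows essentially the same route as the paper: the same explicit choice of $\vec\theta$ from the nonvanishing columns, the termwise bound $\|\vec\theta\|^2 \le (1+\lambda^2)\|\vec{x}_1\|^2 \le (1+\lambda^2)\|\vec{x}\|^2$, and the final step via the $M$-normalisation and the Rayleigh quotient identity $\max_{\vec v\neq 0} \vec v^T\vec v/\vec v^T M\vec v = \|M^{-1}\|$. Your indexing of $\vec\theta$ is in fact slightly more consistent with the ordering in Eq.~\eqref{eq:pset} than the paper's own display, which swaps the roles of $r_1$ and $r_2$ in the sums.
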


\begin{proof} Since $Z(\lambda) = \lambda M_{21} - A_{21}$ holds,   it follows 
from Eq.~\eqref{eq:pset} that
\begin{equation*}
\label{eq:ZLx1}
\begin{aligned}
 Z(\lambda) \vec x_1 & = \sum_{j=1}^{r_1} \lambda x_{1,j}  M_{21} \vec{e}_j - \sum_{j=1}^{r_2} x_{1,j}  A_{21} \vec{e}_j \text{ and }\\
 \sum_{j=1}^r \theta_j \vec p_j & = \sum_{j=1}^{r_1} \theta_j M_{21} \vec{e}_j + \sum_{j=r_1+1}^{r_2} \theta_j  A_{21} \vec{e}_j.
\end{aligned}
\end{equation*}
Hence, one solution of Eq.~\eqref{eq:alpha} is 
$\vec \theta = \begin{bmatrix}
\lambda x_{1,1} & \ldots & \lambda x_{1,r_1} & -x_{1,1} & \ldots & -x_{1,r_2}
\end{bmatrix}$, and it satisfies the estimate
\begin{equation}
\label{eq:t_est}
  \begin{aligned}
    \| \vec \theta\|^2 & =  \lambda^2 \sum_{j=1}^{r_1} x_{1,j}^2 +  \sum_{j=1}^{r_2} x_{1,j}^2
     \leq \left (1+\lambda^2 \right)  \| \vec{x}_1 \|^2 \\
     &  \leq \left (1+\lambda^2 \right)  \| \vec{x} \|^2
     =  \left (1+\lambda^2 \right)  \frac{\vec{x}^T \vec{x}}  {\vec{x}^T M \vec{x}} \\
     &  \leq \left (1+\lambda^2 \right) \max_{\vec{v} \in \mathbb{R}^n ,
  \vec{v} \neq 0} \frac{ \vec{v}^T \vec{v}}{ \vec{v}^T M \vec{v}}  
     = \left (1 + \lambda^2 \right ) \|M^{-1}\|
\end{aligned}
\end{equation}
because $\vec{x}^T M \vec{x}= 1$. The proof is now complete.
\end{proof}

\noindent The combination of Lemmas~\ref{DimRedLemma},~\ref{alphaLemma},~and~\ref{alphaCorollary}
yields the following result:
\begin{theorem}
  \label{thm:alpha_estimate}
Let $(\lambda,\vec{x}) \in (0,\Lambda) \times \mathbb{R}^{n} \setminus
\{ 0 \}$ be an eigenpair of Eq.~\eqref{eq:AlgEigValP}.
Let the vectors $\{ \vec p_1, \ldots , \vec{p}_r \}$ be defined by
Eq.~\eqref{eq:pset} and the matrix $Q_{22} \in \mathbb{R}^{n_2 \times
  (K + Nr)}$ as in Eq.~\eqref{eq:defB}. Define $R$ by the Cholesky factorisation $A_{22} = R^T R  $, and let
$\sum_{i=1}^{n_2} \sigma_i \vec{u}_i \vec{w}_i^T$ be the SVD of
$RQ_{22}$. For any truncation error level $tol > 0$, define 
  \begin{equation*}
    K_c(tol) := \max \{\, i \,|\, \sigma_i^2 \left( 1 + \Lambda_N^2 \left (1 + \lambda^2 \right )\|M^{-1}\|  \right) >  tol   \,\},
  \end{equation*}
where $\Lambda_N$ is given by Eq.~\eqref{eq:Lebesgue}. Define the
method matrix $\tilde{Q}$ and the subspace $\tilde V$ by
Eqs.~\eqref{eq:redSubSp}~and~\eqref{eq:Bcut}.

Then there exists $\tilde{\lambda} \in \sigma_{\tilde{V}}(A,M)$ such
that
  \begin{equation*}
    \frac{|\lambda - \tilde{\lambda}|}{\lambda} \leq 2 C_M C(\lambda) \Lambda (4\gamma)^3 \left( \frac{1}{4(\gamma-1)} \right)^{2 N + 2} + 2 tol
  \end{equation*}
for any parameter value combination $N, \gamma$ where the constants $C_M$
and $C(\lambda)$ are as in Theorem~\ref{th:FinalEstimate}.
\end{theorem}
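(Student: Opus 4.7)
The plan is to combine Lemmas \ref{DimRedLemma}, \ref{alphaLemma}, and \ref{alphaCorollary} in sequence; no new estimates are required, only a careful assembly. Lemma \ref{DimRedLemma} already gives
\begin{equation*}
\frac{|\lambda - \tilde\lambda|}{\lambda} \leq 2 C_M C(\lambda) \Lambda (4\gamma)^3 \left(\frac{1}{4(\gamma-1)}\right)^{2N+2} + 2 \|R(B - \tilde B)\|^2 \min_{B\hat{\vec\alpha} = \vec u_2} \|\hat{\vec\alpha}\|^2,
\end{equation*}
so the first term of the claimed bound is already in place and only the perturbation term needs to be controlled by $2\,tol$.

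First I would bound the coefficient norm. The theorem's hypothesis identifies $Q_{22}$ with $B$, so Lemma \ref{alphaLemma} produces a specific $\vec\alpha$ with $B \vec\alpha = \vec u_2$ and $\|\vec\alpha\|^2 \leq 1 + \Lambda_N^2 \|\vec\theta\|^2$. The vectors $\{\vec p_j\}$ are chosen exactly as in \eqref{eq:pset}, which is precisely the hypothesis of Lemma \ref{alphaCorollary}; that lemma therefore yields $\|\vec\theta\|^2 \leq (1+\lambda^2)\|M^{-1}\|$. Chaining these two estimates gives
\begin{equation*}
\min_{B\hat{\vec\alpha} = \vec u_2} \|\hat{\vec\alpha}\|^2 \leq \|\vec\alpha\|^2 \leq 1 + \Lambda_N^2 (1+\lambda^2) \|M^{-1}\|.
\end{equation*}

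Next I would exploit the SVD construction of $\tilde B$ in \eqref{eq:Bcut}. Since $RB = \sum_{i=1}^{n_2} \sigma_i \vec u_i \vec w_i^T$ and the truncation retains only the first $K_c$ terms, the residual $R(B - \tilde B) = \sum_{i > K_c} \sigma_i \vec u_i \vec w_i^T$ has operator norm equal to $\sigma_{K_c+1}$. The defining property of $K_c(tol)$ gives $\sigma_{K_c+1}^2 \bigl(1 + \Lambda_N^2 (1+\lambda^2)\|M^{-1}\|\bigr) \leq tol$, and multiplying this inequality with the bound for $\|\vec\alpha\|^2$ gives $\|R(B-\tilde B)\|^2 \min_{B\hat{\vec\alpha} = \vec u_2} \|\hat{\vec\alpha}\|^2 \leq tol$. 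Substituting into \eqref{eq:bestapprox_tol} finishes the proof. There is no real obstacle beyond bookkeeping: the only point to watch is that $Q_{22}$ in the theorem statement refers to the (possibly rank-deficient) sample matrix $B$ itself, so the SVD in \eqref{eq:Bcut} is taken of $RB$ and the truncation error in operator norm is simply $\sigma_{K_c+1}$.
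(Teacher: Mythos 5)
Your proposal is correct and matches the paper's intended argument exactly: the paper proves Theorem~\ref{thm:alpha_estimate} precisely by chaining Lemma~\ref{DimRedLemma} with the coefficient bounds of Lemmas~\ref{alphaLemma} and~\ref{alphaCorollary} and using the SVD truncation rule so that $\sigma_{K_c+1}^2\bigl(1+\Lambda_N^2(1+\lambda^2)\|M^{-1}\|\bigr)\leq tol$. Your remark that $Q_{22}$ in the statement is to be read as the (possibly rank-deficient) sample matrix $B$ of Eq.~\eqref{eq:defB} is also the correct reading of the paper's notation.
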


A typical application of CPI is the solution of the lowest
eigenmodes of the Laplace operator in a bounded domain $\Omega \subset \mathbb{R}^d$
using the finite element method. When piecewise linear basis functions are used on quasi-uniform simplicial meshes, it is known that $\| M^{-1} \| \leq Ch^{-d}$ where the constant $C$ is independent on the mesh size $h$, see~\cite[Section~6]{Braess:2007}. A reasonable value for the cut-off index in Theorem~\ref{thm:alpha_estimate}~can be computed with the help of this estimate.

\section{Model problems}
\label{sec:num_ex}

We proceed to illustrate theoretical results by two numerical
examples. Both examples involve the eigenpairs $(\lambda',u) \in
(0,\Lambda) \times \mathcal{V}$ of the variational eigenvalue problem 
\begin{equation} \label{eq:cont_eigen_again}
(\nabla u,\nabla v) = \lambda' (u,v) \quad \text{for all } \quad v \in \mathcal{V}
\end{equation}
of the Laplace operator where $\mathcal{V} \subset
H^1(\Omega)$ is a subspace where the homogeneous Dirichlet boundary
condition holds at least on a part of the boundary $\partial
\Omega$. Problem~\eqref{eq:cont_eigen_again} is discretised using
finite element method with piecewise linear basis functions
leading to the algebraic eigenvalue problem \eqref{eq:AlgEigValP} that
is the subject matter of this article.

\subsection{Computational considerations}

Let us begin by describing an implementation of CPI. The
problem data consists of the spectral interval of interest
$(0,\Lambda)$, the specified upper bound for the relative eigenvalue
error, and the symmetric positive definite stiffness and mass matrices
$A$ and $M$.  Without loss of generality, the basis functions can be
assumed to be ordered so that $A$ and $M$ obey the standard splitting
given in Eq.~\eqref{eq:dec_sys} corresponding to the interior and the
exterior systems. The purpose is to compute spectral approximations
\begin{equation*}
\sigma_V(A,M) \cap (0,\Lambda) \approx \sigma(A,M) \cap (0,\Lambda)
\end{equation*}
for several versions of Eq.~\eqref{eq:AlgEigValP} sharing the same
exterior system. Note that the dimension of the eigenvalue problem may vary 
between different versions as long as $\mathop{range}(A_{21})$ and 
$\mathop{range}(M_{21})$ remain fixed. Thus, the finite element mesh 
of the exterior part stays constant while mesh of the interior part may
vary.

As discussed in Section~\ref{sec:cost}, an effective choice of 
$\gamma$ and $N$ requires \emph{a~priori} information on the eigenvalue
distribution of problem~\eqref{eq:cont_eigen_again}~that is encoded in the 
function $K(l)$ in Eq.~\eqref{eq:ngKl}. We model $K(l)$
by the Weyl law as in Section~\ref{sec:cost}.  Values for $\gamma$ and $N$
are then chosen using Theorem~\ref{th:optimalparameters}. For practical reasons, 
we set $C(\lambda) = C_M = 1$ in Eq.~\eqref{FinalRelativeErrorEstimate}. For 
a given finite element mesh size $h>0$, the term $\| \vec{\alpha} \|$ is 
approximated by setting $\|M^{-1} \| = h^{-d}$ and applying Lemmas
\ref{alphaLemma} and \ref{alphaCorollary}.

CPI consists of the following steps:


\begin{enumerate}
\item Using the target relative error level, determine $N$ and $\gamma$ using,
  e.g., Eqs.~\eqref{eq:gamma2N} and~\eqref{eq:tol2gamma}. Compute the Chebyshev
  points $\{ \xi_i \}_{i=1}^N$ using Eq.~\eqref{eq:defChebyIP}.
  
\item Compute the $K = K(\gamma \Lambda)$ smallest eigenpairs $(\mu_k,
  \vec{v}_k)$ of the exterior system $A_{22} \vec v_k = \mu_k M_{22}
  \vec v_k$.

\item Let $\vec{p}_j$, $j=1,\ldots,r_1$, and $\vec{p}_j$,
  $j=r_1+1,\ldots,r$ with $r = r_1+r_2$, be the nonzero columns of
  $M_{21}$ and $A_{21}$, respectively. Compute the sample vectors
  $\vec q_{11},\ldots,\vec q_{Nr}$ as solutions of $(A_{22} - \xi_i
  M_{22}) \vec{q}_{i j} = \vec p_j$.
  
\item Collect eigenvectors from Step~1 and sample vectors from Step~2 into
  matrix $B = [\vec v_1, \ldots, \vec v_K, (I-P_{\tilde{\Lambda}}) \vec q_{11}, \ldots, (I-P_{\tilde{\Lambda}}) \vec q_{Nr}]$.
  Compute the SVD $RB = \sum_i \sigma_i \vec{u}_i \vec{w}^T_i$ where
  $\sigma_i$~are ordered in non-increasing order and $R^T R = A_{22}$.
\item Choose the cut-off index as $K_c = \max \{ \; i \; | \sigma_i \geq \|
  \vec{\alpha} \|^{-1} tol \; \}$. Construct the method matrix using vectors
  $\vec{u}_1,\ldots,\vec{u}_{K_c}$ from step 3 as
\begin{equation*}
\tilde Q = \begin{bmatrix} I & 0 \\ 0 & \tilde Q_{22} \end{bmatrix}
\quad \text{where} \quad  \tilde Q_{22} = R^{-1} \begin{bmatrix} \vec{u}_1 & \ldots & \vec{u}_{K_c} \end{bmatrix}.
\end{equation*}
\item Solve the eigenvalue problem $\tilde Q^T A \tilde Q \tilde{\vec{x}} = \tilde{\lambda}
  \tilde Q^T M \tilde Q \tilde{\vec{x}}$, e.g., using the Lanczos iteration.
\end{enumerate}

In Step~2, one has to determine a tolerance for computing the exterior eigenvectors $\vec{v}_1,\ldots \vec{v}_{K}$. We proceed to analyse the effect of exterior eigenvector error to the accuracy of the eigenvalues computed using the CPI method. Our analysis relies on perturbation argument identical to one used in Lemma~\ref{DimRedLemma}. When the exterior eigenvectors are incorrectly computed, the space $V_2$ is replaced by $\hat{V}_2$ defined as
\begin{equation}
\label{eq:everrsub}
\hat{V}_2 = \mathop{range}(\hat{P}_{\tilde{\Lambda}}) \oplus \hat{W}_2 \quad \mbox{where} \quad \hat{W}_2 = \mathop{span}_{i=1,\ldots,N} \{ \hat{f}_{\tilde{\Lambda}}(\xi_i) \begin{bmatrix}
M_{21} & A_{21}
\end{bmatrix} \}.
\end{equation}
The matrix $\hat{P}_{\tilde{\Lambda}}$ is the $A_{22}$-orthogonal projection onto $\mathop{span}_{k=1,\ldots,K} \{ \hat{\vec{v}}_k \}$ and $\hat{f}_{\tilde{\Lambda}}(\xi) = (I-\hat{P}_{\tilde{\Lambda}}) (A_{22}-\xi M_{22})^{-1}$. Note that $P_{\tilde{\Lambda}}$ defined in Eq.~(3.3)~is also an $A_{22}$-orthogonal projection. Following \cite{Bo:2010}, the eigenvector error is measured using the gap, i.e., the maximum angle between the exact and the approximate eigenspace in the $A_{22}$-norm as
\begin{equation}
\label{eq:hdist}
d_H := \mathop{max} 
\left(
\max_{ \substack{ \vec u \in \mathop{range}(P_{\tilde{\Lambda}}) \\ \| \vec{u} \|_{A_{22}} = 1}} \| (I-\hat{P}_{\tilde{\Lambda}}) \vec u \|_{A_{22}}, 
\max_{ \substack{ \hat{\vec{u}} \in \mathop{range}(\hat{P}_{\tilde{\Lambda}}) \\ \| \hat{\vec{u}} \|_{A_{22}} = 1}} \| (I-P_{\tilde{\Lambda}}) \hat{\vec{u}} \|_{A_{22}} \right).
\end{equation}
\begin{corollary}
Let $(\lambda,\vec{x}) \in (0,\Lambda) \times \mathbb{R}^{n} \setminus
\{ 0 \}$ be an eigenpair of Eq.~\eqref{eq:AlgEigValP} with
$\vec{x} = \begin{bmatrix} \vec{x}_1 & \vec{x}_2 \end{bmatrix}^T$
according to the standard splitting. By $\hat{\vec{v}}_1,\ldots, \hat{\vec{v}}_{K}$ denote the set of approximate exterior eigenvectors, and by $\hat{P}_{\tilde{\Lambda}}$ the $A_{22}$-orthogonal projection onto $\mathop{span}_{k=1,\ldots,K} \{ \hat{\vec{v}}_k \}$. Let the method subspace $\hat{V}$~and~$\hat{V}_2$ in 
Eq.~\eqref{eq:everrsub} be related as $V$~and~$V_2$ in Eq.~\eqref{eq:subspace_V}. Then there exists $\hat{\lambda} \in \sigma_{\hat V}(A,M)$ such that
  \begin{equation*}
    \frac{|\lambda - \hat{\lambda}|}{\lambda} \leq 2 C_M C(\lambda) \Lambda (4\gamma)^3 \left( \frac{1}{4(\gamma-1)} \right)^{2 N + 2} 
+ C^\prime \max_{\substack{k=1,\ldots,{n_2} \\ i = 1,\ldots,N}} \left( \frac{\mu_k-\lambda}{\mu_k-\xi_i} \right)^2 d^2_H
\end{equation*}
for any parameter value combination $N, \gamma$ with $C^\prime := 2(1+\Lambda_N)^2 \| \vec{x}_2 \|^2_{A_{22}}$. The constants $C_M$ and $C(\lambda)$ are as in Theorem~\ref{th:FinalEstimate}, $d_H$ is the maximal angle defined by Eq.~\eqref{eq:hdist}, and $\Lambda_N$ is defined in Eq.~\eqref{eq:Lebesgue}.
\end{corollary}

\noindent The contribution of inaccurate exterior eigenvectors to relative error in eigenvalues depends inversely on $\mathop{dist}(\{\xi_i\},\{\mu_k\})$. As Chebyshev interpolation points are not nested and $\mu_k$'s have been determined in Step~2, one may be able to adjust the number of interpolation points $N$ so that $\mathop{dist}(\{\xi_i\},\{\mu_k\})$ increases. 

\begin{proof}  Let $\hat{\vec{u}}_2 = \hat{P}_{\tilde{\Lambda}} \vec{x}_2 +\sum_{i=1}^N \ell_i(\lambda) \hat{f}_{\tilde \Lambda}(\xi_i) Z(\lambda) \vec x_1$ and $\vec{u}_2$ be as in Lemma~\ref{DimRedLemma}. Then
\begin{equation*}
\vec{u}_2 - \hat{\vec{u}}_2 = 
( \hat{P}_{\tilde{\Lambda}} - P_{\tilde{\Lambda}} )\vec{x}_2 + 
\sum_{i=1}^N \ell_i(\lambda)(P_{\tilde \Lambda}-\hat{P}_{\tilde \Lambda}) (A_{22}-\xi_i M_{22})^{-1} Z(\lambda) \vec{x}_1.
\end{equation*}
Since $\| \hat{P}_{\tilde{\Lambda}} - P_{\tilde{\Lambda}} \|_{A_{22}} = d_H$ by \cite{BSPIT}, we have
\begin{equation*}
\| \vec{u}_2 - \hat{\vec{u}_2} \|_{A_{22}} 
\leq  d_H \| \vec{x}_2 \|_{A_{22}} 
+  d_H \sum_{i=1}^N |\ell_i(\lambda)| 
 \| (A_{22}-\xi_i M_{22})^{-1} Z(\lambda) \vec{x}_1 \|_{A_{22}}
\end{equation*}
and further,
\begin{equation*}
\| \vec{u}_2 - \hat{\vec{u}_2} \|_{A_{22}} 
\leq 
 d_H \| \vec{x}_2 \|_{A_{22}} 
+ d_H \Lambda_N \max_{i=1,\ldots,N} 
 \| (A_{22}-\xi_i M_{22})^{-1} Z(\lambda) \vec{x}_1 \|_{A_{22}}.
\end{equation*}
It now follows from a similar argument that was used to derive Eq.~\eqref{BetaEstimates} that
\begin{equation*}
 \| (A_{22}-\xi_i M_{22})^{-1} Z(\lambda) \vec{x}_1 \|^2_{A_{22}} 
 = \sum_{k=1}^{n_2} \mu_k \frac{ \beta^2_k(\lambda)}{ (\mu_k-\xi_i)^2 }
 = \max_{k=1,\ldots,{n_2}} \left( \frac{\mu_k-\lambda}{\mu_k-\xi_i} \right)^2 \| \vec{x}_2 \|^2_{A_{22}}.
\end{equation*}
The claim follows by a perturbation argument as in Lemma~\ref{DimRedLemma}.
%
\end{proof}
\begin{remark} An alternative approach for  Step~3~is to directly solve for
  $\tilde{\vec{q}}_{ij} := (I-P_{\tilde\Lambda}) \vec{q}_{ij}$ using the saddle point formulation
\begin{equation*}
\begin{bmatrix} (A_{22} -\xi_i M_{22}) & M_{22} B_1 \\ B_1^T M_{22} & 0 \end{bmatrix} 
\begin{bmatrix} \tilde{\vec{q}}_{ij} \\ \vec{\nu} \end{bmatrix} = 
\begin{bmatrix} \vec{p}_{j} \\ 0 \end{bmatrix}
\end{equation*}
where $B_1 = \begin{bmatrix} \vec{v}_1 & \ldots & \vec{v}_K \end{bmatrix}$ and
$\vec{v}_k$'s are computed in Step~2. This formulation preserves most of the
sparse structure of the linear systems~\eqref{SampleVectors} and is numerically stable when an interpolation point $\xi_i$ is close to $\mu_k$. All numerical experiments were performed without paying attention to this issue.
\end{remark}

Due to memory constraints, it is not always feasible to store vectors $\vec{q}_{ij}$ in
Step~3 or to explicitly construct $B \in \mathbb{R}^{n_2 \times (K+Nr)}$ in
Step~4 when $n_2$ or $K+Nr$ is prohibitively large. In
the construction of the method matrix, Step~5, only vectors
$\vec{u}_1,\ldots,\vec{u}_{K_c}$ corresponding to the largest singular values of
$RB$ are needed. Steps~3 and 4 can be combined into an iterative
solution of the largest singular values of $RB$ and the corresponding vectors
$\vec{u}_i$ using action of $(R B) (RB)^T$ without storing $\vec{q}_{ij}$.

\subsection{2D Rectangle}
\label{sec:rectangle}
We consider numerical solution of Eq.~\eqref{eq:cont_eigen_again} in the
rectangular domain shown in Fig.~\ref{fig:vt_mri}. The homogeneous Dirichlet
boundary condition is used. The domain is uniformly discretised with $67\,872$
triangular elements and $34\,241$ nodes. With the boundary conditions, this
resulted in $n = 33\,633$ degrees of freedom with $n_\Gamma = 163$, $n_1 =
14\,638$, and $n_2 = 18\,995$ in Eq.~\eqref{eq:dec_sys}. The spectral interval
of interest $(0,\Lambda)$ with $\Lambda = 135$ allows us to compute $15$ of the
lowest eigenvalues $\lambda_1,\lambda_2,\ldots$ in non-decreasing order. The
numerically obtained largest relative eigenvalue errors (without using the SVD-based
dimension reduction process of Section~\ref{sec:dim_red}) and its upper bound
from Eq.~\eqref{FinalRelativeErrorEstimate} are shown in
Fig.~\ref{fig:errors_fixed}.

\begin{figure}[h]
  \centering
  \includegraphics[width=.45\textwidth]{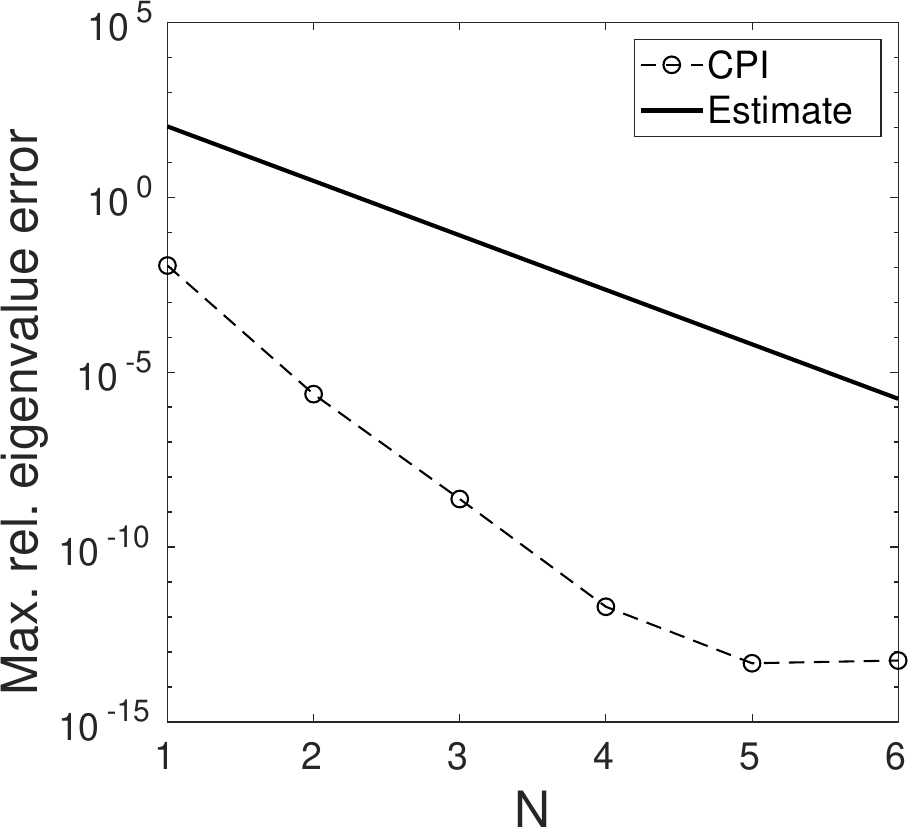} \hspace{18pt}
  \includegraphics[width=.45\textwidth]{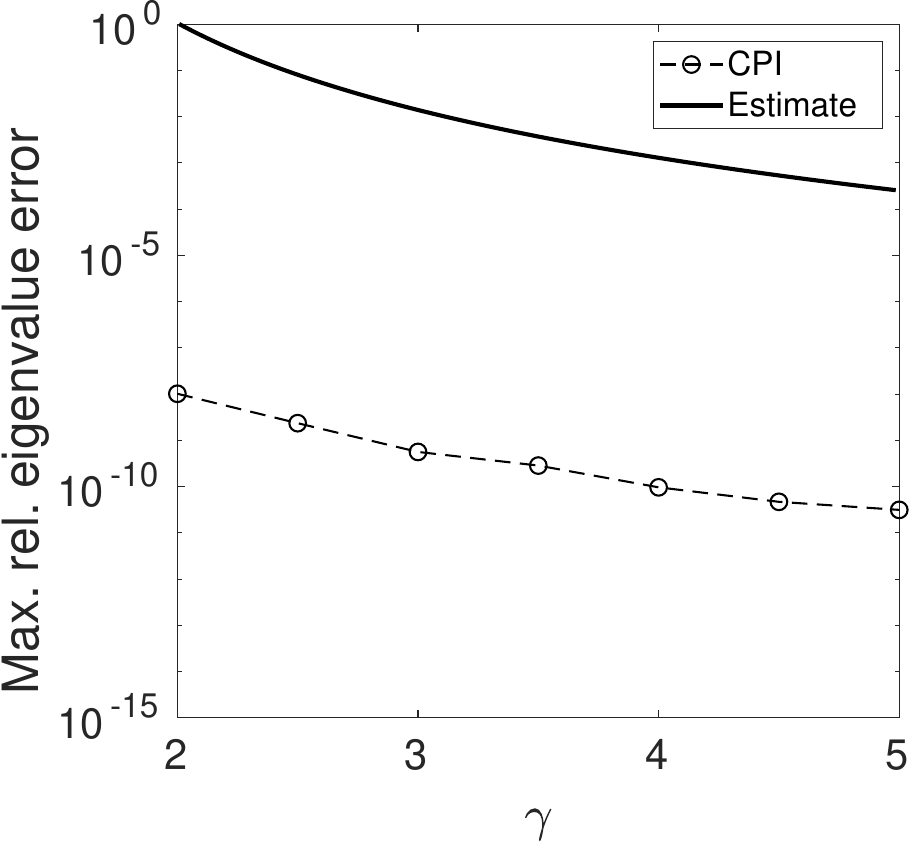}
  \caption{Numerical and theoretical maximal relative errors of eigenvalues
    $\lambda \in (0,135)$ for $\gamma = 2.5$ (left) and $N=3$ (right).}
  \label{fig:errors_fixed}
\end{figure}

The largest relative eigenvalue errors with several values of $\gamma$ and $N$
are compared to the theoretical estimate
(Eq.~\eqref{FinalRelativeErrorEstimate}) in Fig.~\ref{fig:contour_error}. For
validation of the cost model in Eq.~\eqref{eq:cost}, the computational time to
solve Eq.~\eqref{eq:subspace_eigenvalue} with several $N$ and $\gamma$ is
illustrated in Fig.~\ref{fig:contour_time}.

\begin{figure}[h]
  \centering
  \includegraphics[width=.45\textwidth]{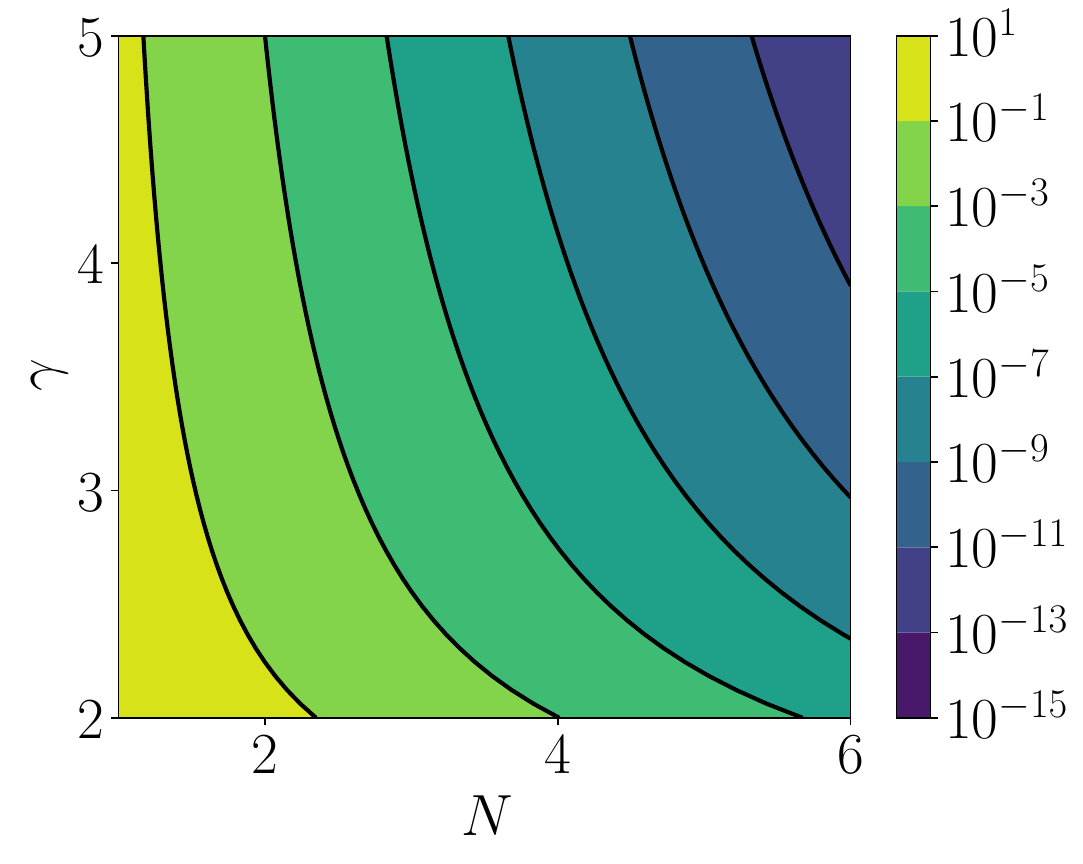}
  \includegraphics[width=.45\textwidth]{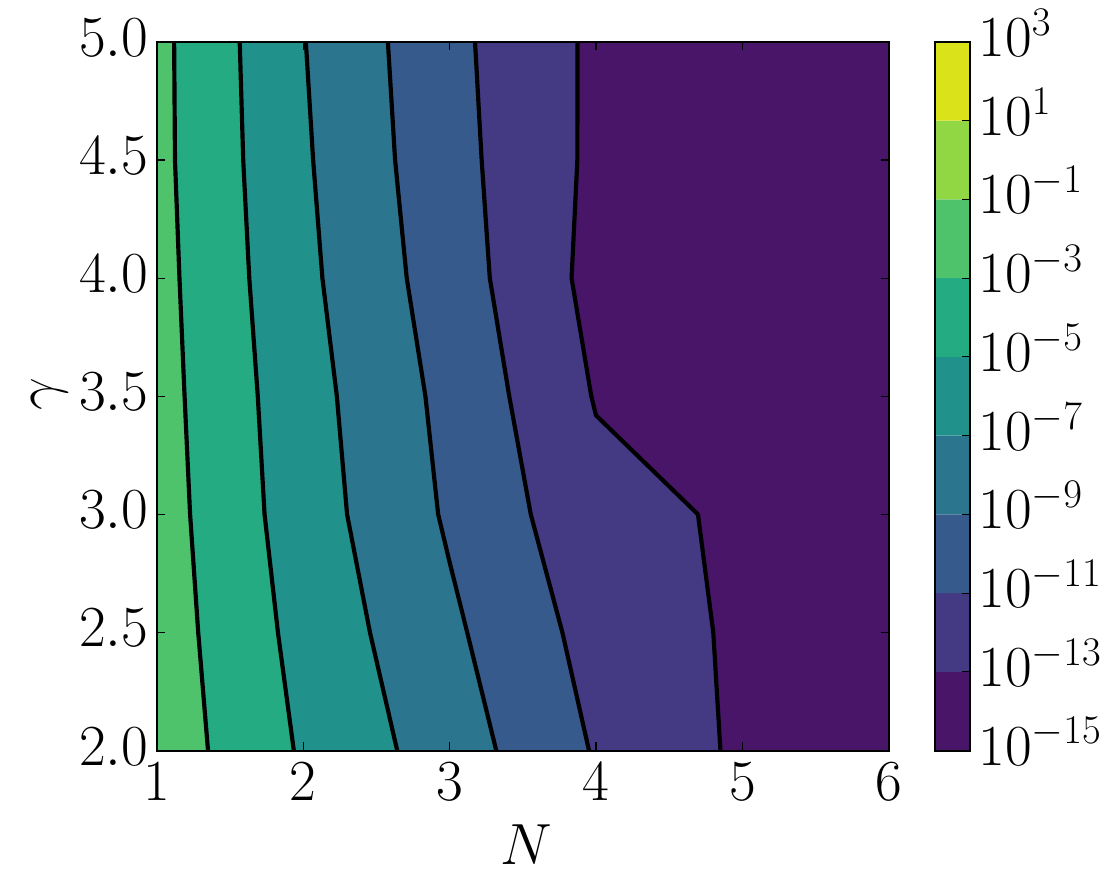}
  \caption{Largest relative eigenvalue errors for $15$ lowest eigenvalues of the
    $2D$ rectangle using several values of $N$ and $\gamma$ from the theoretical
    estimate Eq.~\eqref{FinalRelativeErrorEstimate} (left) and from numerical
    experiments (right). The value of $K(\gamma \Lambda)$ varies from $K=18$
    when $\gamma=2$ to $K=48$ for $\gamma=5$.}
  \label{fig:contour_error}
\end{figure}

The effect of the SVD-based dimension reduction of Section~\ref{sec:dim_red} is
demonstrated in Fig.~\ref{fig:sigmas}. The relative eigenvalue error and $\mathop{dim}(
\tilde{V}_2)$ are given as a function of the truncation error level $tol > 0$.

\begin{figure}[h]
  \centering
  \includegraphics[width=.6\textwidth]{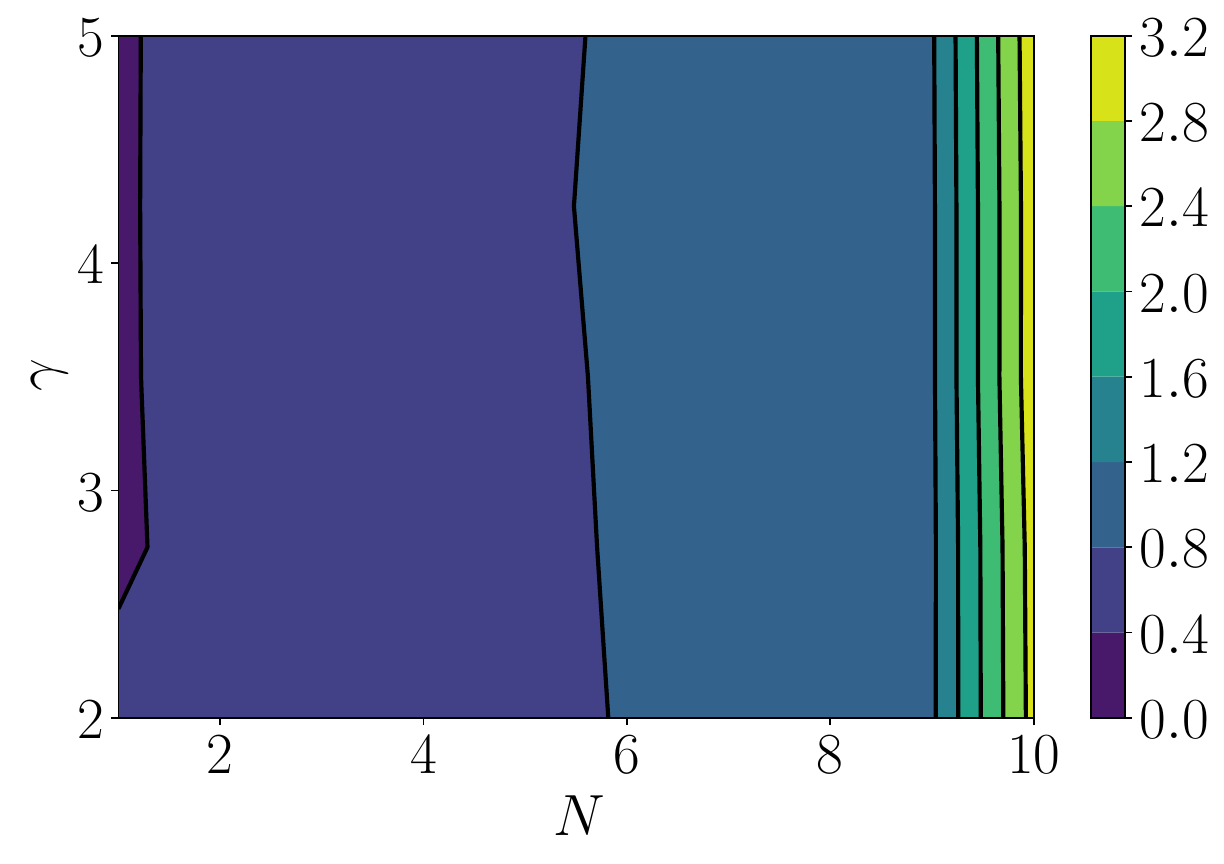}
  \caption{The computation time (in seconds) required by Matlab R2017a \texttt{eigs} to solve the $15$ smallest eigenvalues of
    Eq.~\eqref{eq:subspace_eigenvalue} for several values of $N$ and $\gamma$.
    The time is averaged from $50$ computations. Intel Xeon E5-1630 CPU with 32GB of RAM was used.}
\label{fig:contour_time}
\end{figure}

\begin{figure}[h]
  \centering
  \includegraphics[width=.45\textwidth]{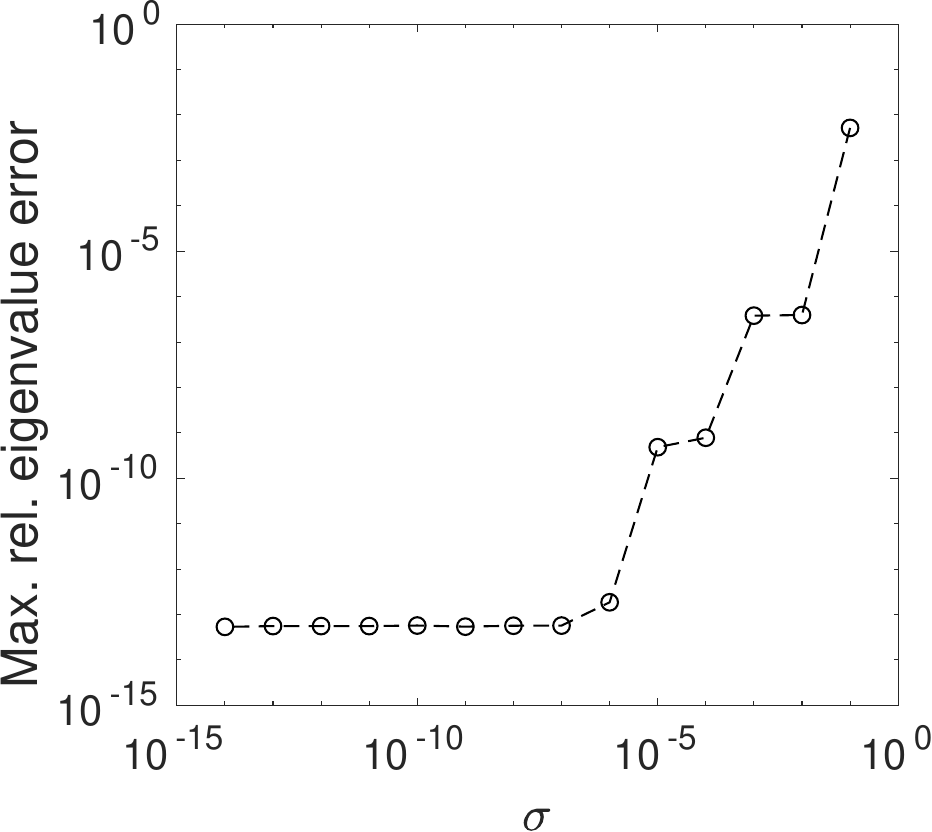} \hspace{18pt}
  \includegraphics[width=.45\textwidth]{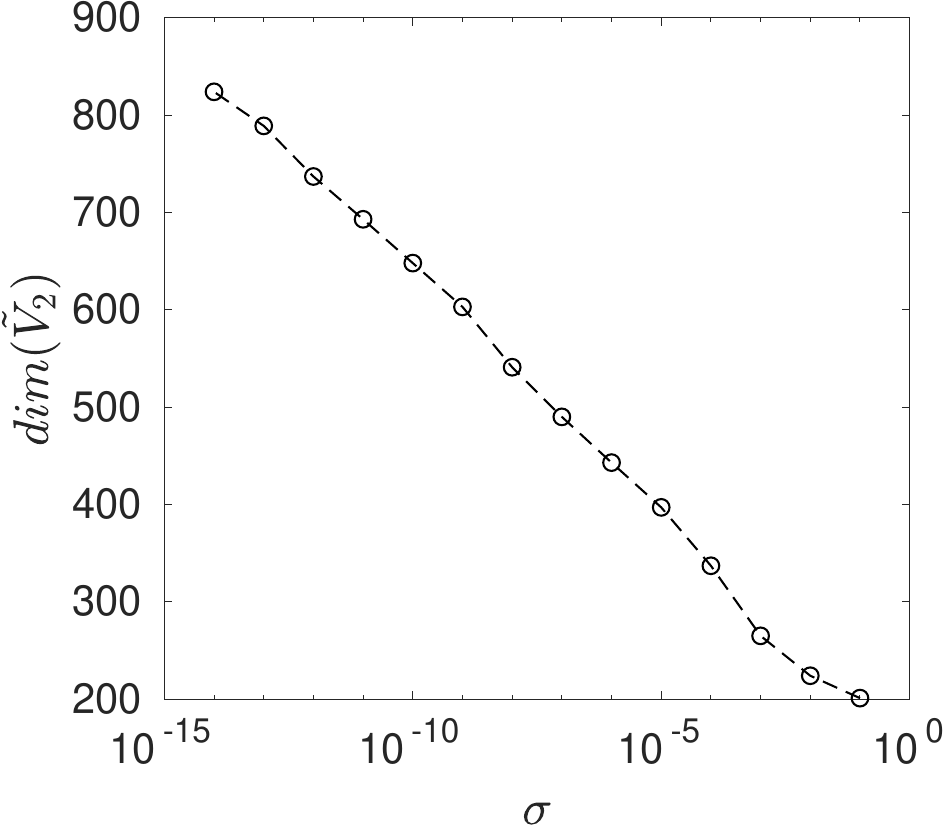}
  \caption{The effect of the cut-off threshold $\sigma$ in SVD to the relative
    error and the size of the method matrix $Q$. The parameter values $\gamma =
    4$ and $N = 6$ were used.}
  \label{fig:sigmas}
\end{figure}

Finally, the CPI method was benchmarked against the CMS implementation detailed
in Section~\ref{sec:cms}. Values for $N$ and $\gamma$ were chosen from
Eqs.~\eqref{eq:gamma2N}~and~\eqref{eq:tol2gamma} given a list of target
error levels. The value of $tol$ was adjusted for each target
error level. The comparison is illustrated in Fig.~\ref{fig:error_cms}.

\subsection{3D Acoustic example}
\label{sec:acoustic}
The computational domain shown in Fig.~\ref{fig:vt_mri} consists of a human
vocal tract geometry $\Omega_1$ and a mock up model of the MRI head coil $\Omega_2$. The vocal tract
geometries were automatically extracted from MRI data as explained
in~\cite{Aalto:MesTec:2014,Ojalammi:Segmentation:2017}, and the interface $\Gamma$ was attached. The vocal tract was embedded into a head model purchased
from Turbosquid~\cite{Turbosquid:2005}.

Homogeneous Dirichlet boundary condition was posed on the areas marked in
Fig.~\ref{fig:vt_bc}, and the Neumann condition was used on other parts of the  boundary.
The interface $\Gamma$ is a spherical surface separating $\Omega_1$ and $\Omega_2$. We use three versions of the 
vocal tract geometry corresponding to  
Finnish vowels \textnormal{[\textipa{\textscripta}],[\textipa{i}]}, and \textnormal{[\textipa{u}]} as visualised in Fig.~\ref{fig:vt_domains}. The domain $\Omega_2$ contains $522\,517$
tetrahedral elements and $101\,222$ nodes, and the interface has $n_\Gamma =
950$ degrees of freedom. Having set the boundary conditions, we have $n_2 = 97\,375$. 

\begin{figure}
\begin{minipage}[t]{0.45\textwidth}
  \centering
  \includegraphics[width=\textwidth]{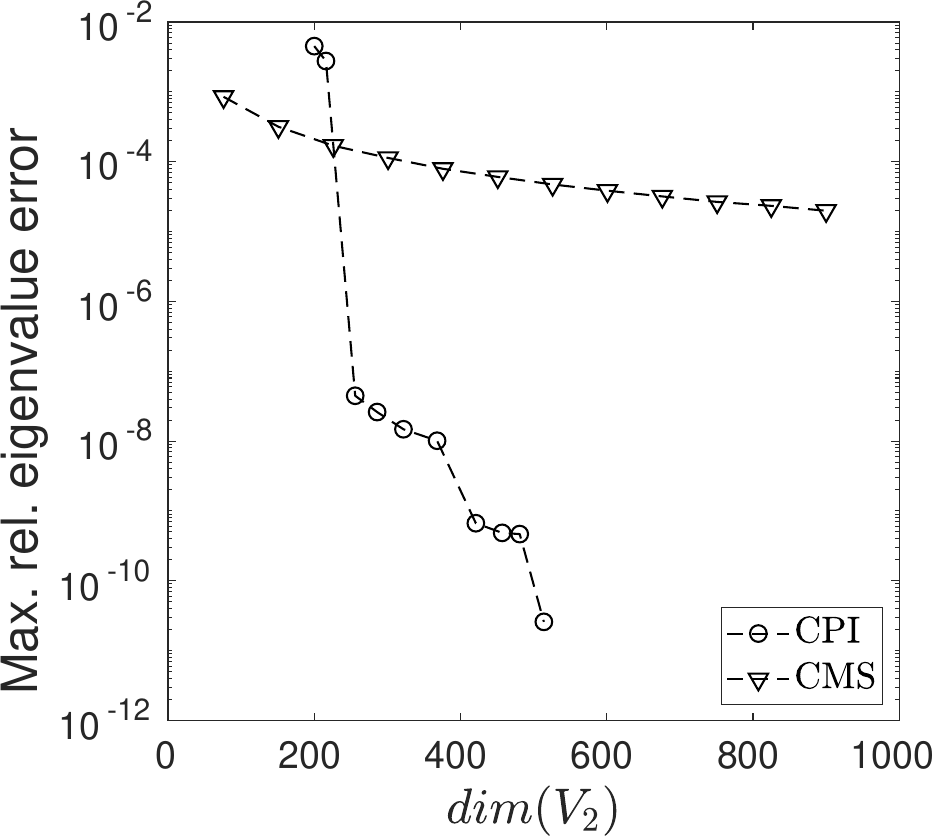}
  \caption{The CPI method compared with CMS in the 2D rectangular
    domain. The $x$-axis corresponds to the amount of degrees of freedom related
    to $\Omega_2$.}
  \label{fig:error_cms}
\end{minipage}
\hfill
\begin{minipage}[t]{0.45\textwidth}
  \centering
  \includegraphics[width=.6\textwidth]{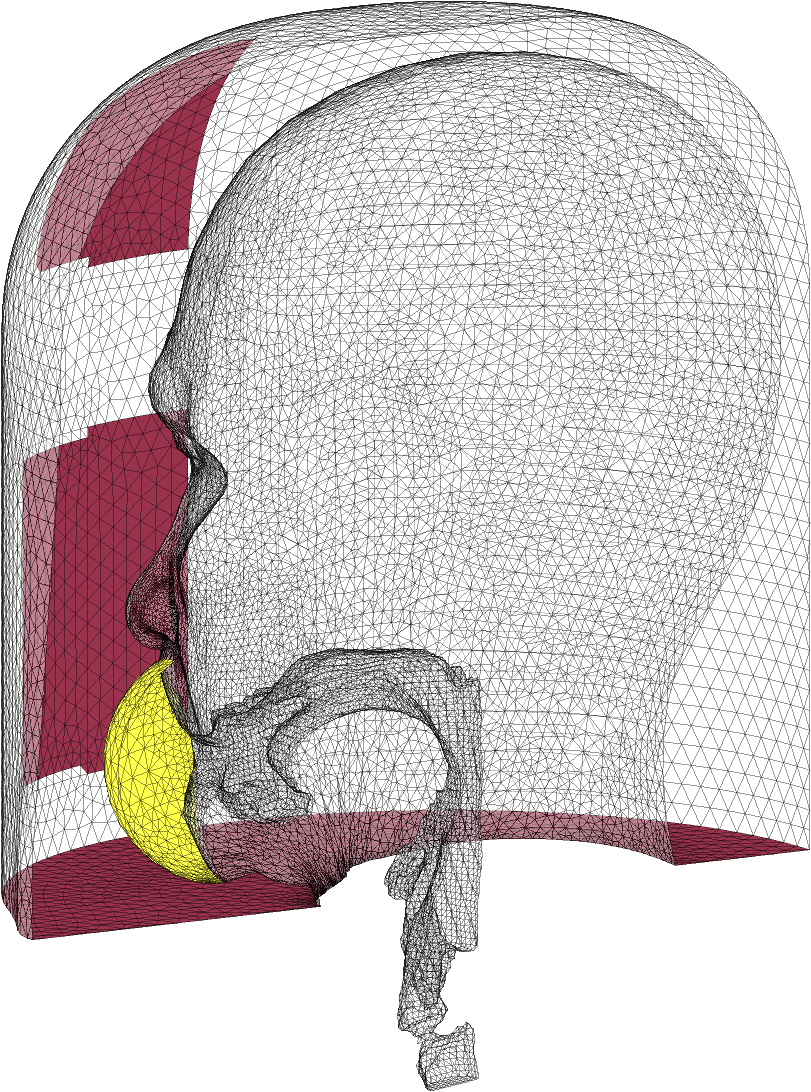}
  \caption{A visualisation of the special degrees of freedom for the acoustic
    example. Dirichlet boundary condition is posed on the red area, and the
    interface $\Gamma$ is marked by the yellow faces.}
  \label{fig:vt_bc}
  \end{minipage}
 \end{figure}

The eigenvalues $\lambda$ in Eq.~\eqref{eq:cont_eigen_again}  and resonant frequencies satisfy $2 \pi f =  c \lambda^{1/2}$ where $c = 344 \tfrac{\mathrm{m}}{\mathrm{s}}$ is the speed of sound at temperature $294$K. Setting the target relative eigenvalue error to $10^{-6}$ corresponds 
to relative error of $10^{-3}$ for resonant frequencies. The spectral interval
of interest $\lambda \in (0,3\,000)$ matches frequencies up to $3\,
\mathrm{kHz}$. Eqs.~\eqref{eq:gamma2N}~and~\eqref{eq:tol2gamma} give $N = 3$,
$\gamma = 8$ for a relative eigenvalue error level $10^{-6}$.

Writing  
\begin{equation*}
  \begin{bmatrix}
    A_{11}  &  A_{12} \tilde{Q}_{22} \\
    A_{21}\tilde{Q}_{22} & \tilde{Q}_{22}^T A_{22} \tilde{Q}_{22}
  \end{bmatrix}
  \tilde{\vec{x}} = \tilde{\lambda}
  \begin{bmatrix}
    M_{11}  &  M_{12} \tilde{Q}_{22} \\
    M_{21}\tilde{Q}_{21} & \tilde{Q}_{22}^T M_{22} \tilde{Q}_{22}    
  \end{bmatrix}
  \tilde{\vec{x}}
\end{equation*}
for a given a method matrix $\tilde{Q}$, we can use the same blocks
$\tilde{Q}_{22}^T A_{22} \tilde{Q}_{22}$ and $\tilde{Q}_{22}^T M_{22}
\tilde{Q}_{22}$ for different versions, and only the off-diagonal blocks 
have to be updated.

A conservative choice for truncation index $K_c$ was made according to target
relative error $10^{-6}$ resulting into subspace $\tilde{V}_2$ with $\mathop{dim}
(\tilde{V}_2) = 1\,601$. Forty lowest resonant frequencies could be computed to the
given tolerance. The computation times and maximum relative eigenvalue errors
are listed in Table~\ref{tab:acoustic_results}. The sample vectors in Step~3
were solved in parallel for each sample point using different cores of the CPU.
The precomputations needed to form the CPI method matrix took 13 minutes and 15
seconds. As shown in Table~\ref{tab:acoustic_results}, the time difference
between solving the original and the reduced eigenvalue problem is around $20$
seconds. Thus, $40$ different versions of the acoustic eigenvalue problems need
to be solved in order to break even in terms of computational time.

\begin{figure}[h]
  \centering
  \includegraphics[width=.3\textwidth]{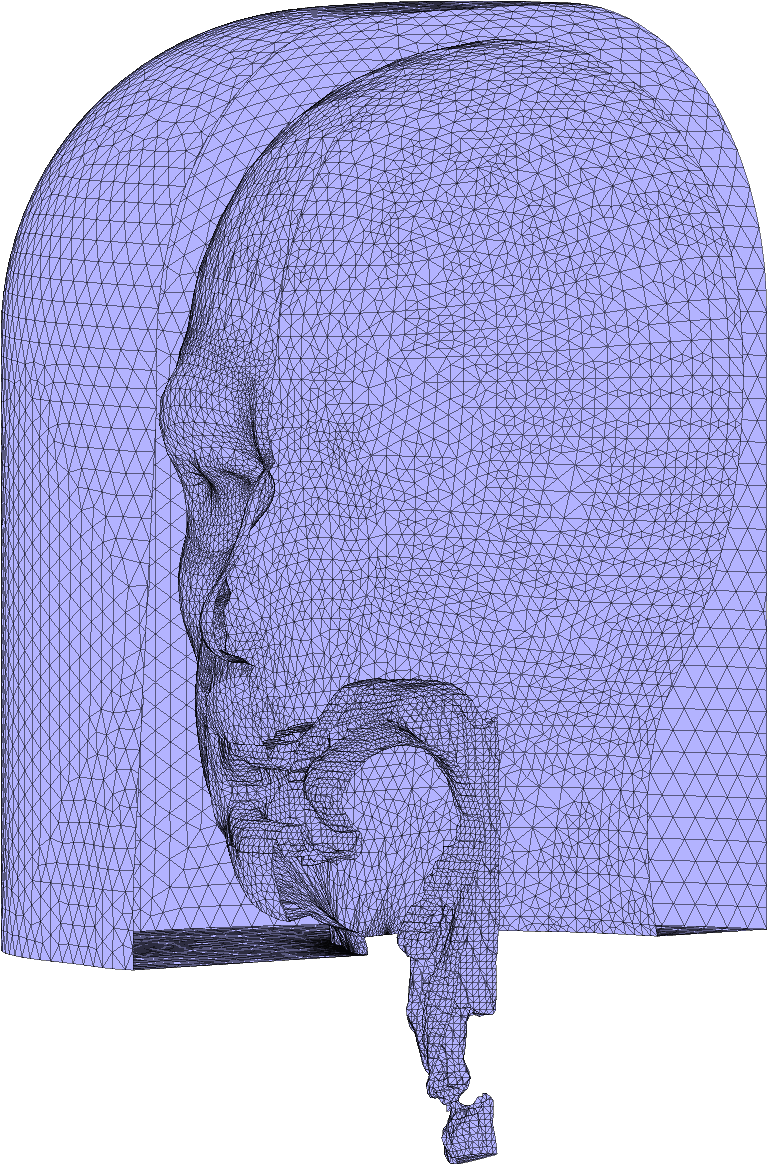} \hfill
  \includegraphics[width=.3\textwidth]{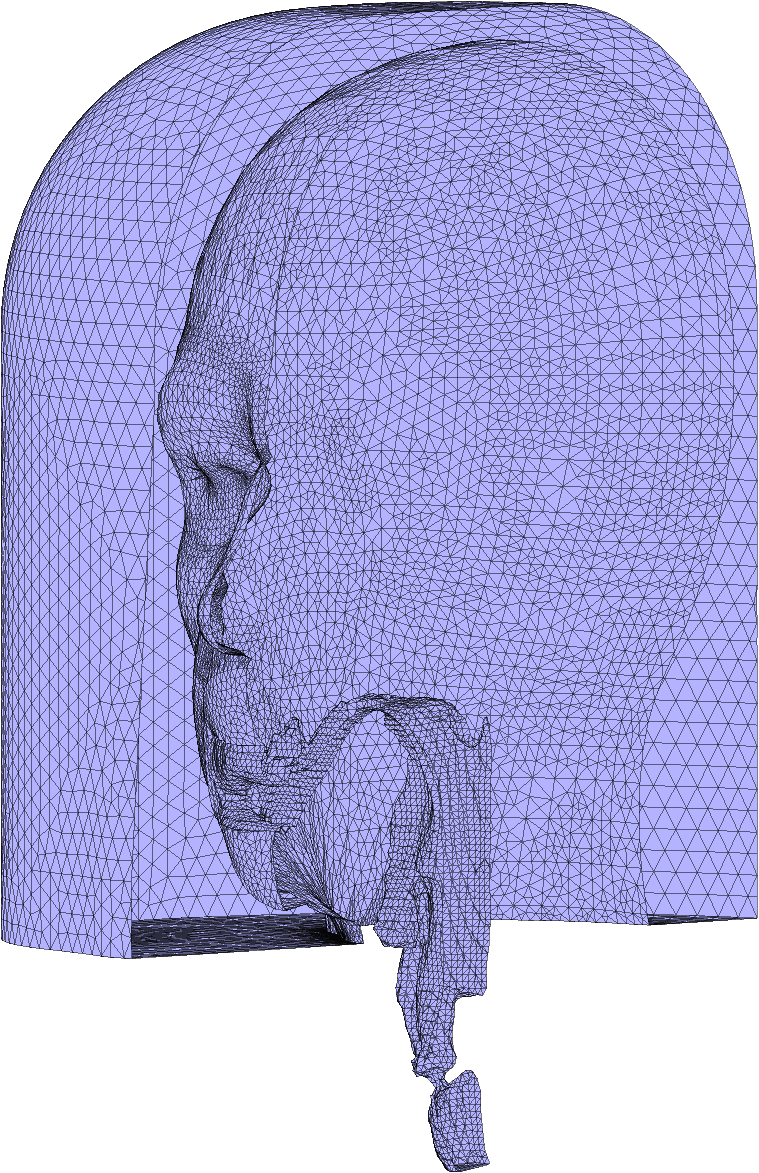} \hfill
  \includegraphics[width=.3\textwidth]{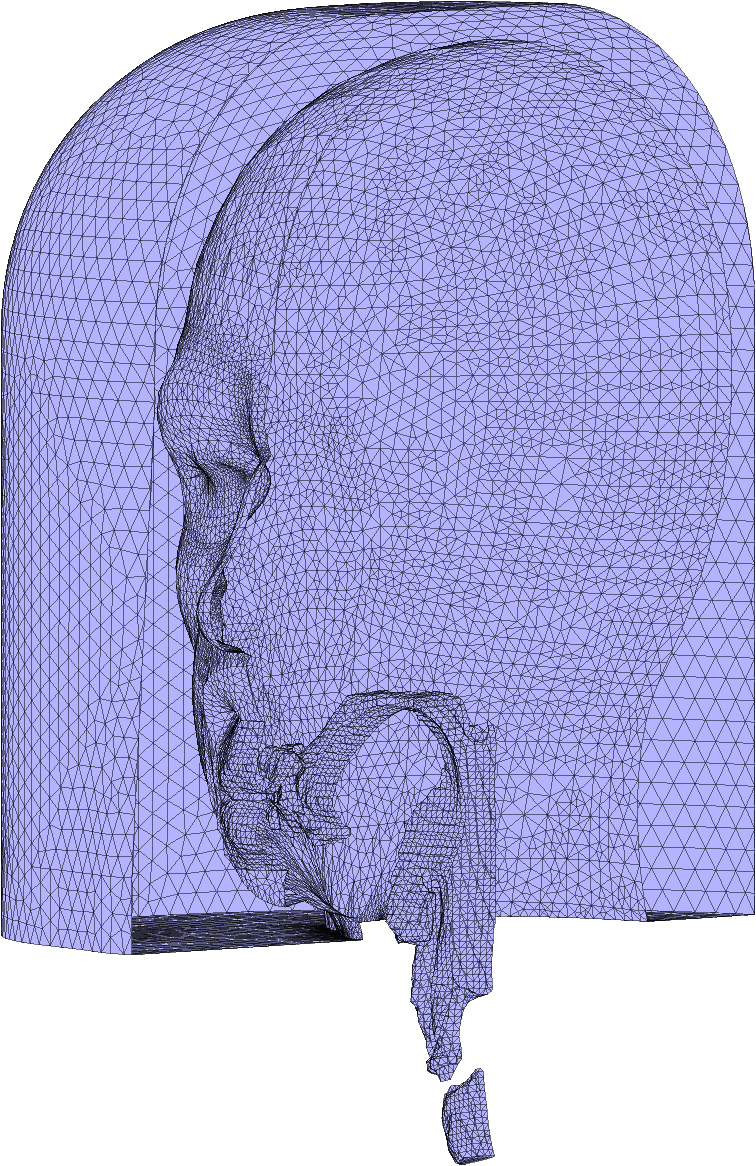}
  \caption{The three vocal tract geometries
    \textnormal{([\textipa{\textscripta}], [\textipa{i}], [\textipa{u}])}
    together with the fixed exterior domain $\Omega_2$. }
  \label{fig:vt_domains}
\end{figure}
\begin{table}
  \centering
  \begin{tabular}{ccccc}
    vowel  & $n_1$  &  CPI & original & max. rel. error    \\
    \hline
    {[\textipa{\textscripta}]}  & $16\,293$    &	$4.44$	&	$26.05$  & $7.05\cdot 10^{-8}$     \\
    {[\textipa{i}]} & $13\,939$ &	$4.57$	&	$28.49$  & $7.28\cdot 10^{-8}$     \\
    {[\textipa{u}]} & $15\,139$ &	$4.66$	&	$25.66$  & $7.36\cdot 10^{-8}$  
  \end{tabular}
  \caption{The computation time (in seconds) required by Matlab  \texttt{eigs} to solve
    the original eigenvalue problem Eq.~\eqref{eq:AlgEigValP} vs. to solve Eq.~\eqref{eq:subspace_eigenvalue} where $\tilde{Q}$ is constructed by dimension reduced CPI. Forty lowest eigenvalues were solved, and their maximum relative
    eigenvalue error is given. The same
    software and hardware were used as specified in
    Fig.~\ref{fig:contour_time}.}
  \label{tab:acoustic_results}
\end{table}

A comparison between CPI and CMS was also performed in the 3D example. The
values $\gamma=8,\; N=3$ were used in CPI with a varying cut-off threshold
$\sigma$ in order to produce comparable dimensions for the subspace
$\tilde{V}_2$. The largest relative eigenvalue error for both methods was
measured using different subspace dimensions. Additionally, the relative
eigenvalue error for each of the $40$ smallest eigenvalues was compared when the
subspace dimension was $1\,600$. For comparison, Matlab \texttt{eigs} uses by
default a subspace of dimension at most $2k$, where $k$ is the number of
eigenvalues to be computed. The results are shown in Fig.~\ref{fig:cpi_vs_cms}.
Computing this many eigenvalues of the exterior system for the CMS method
required $27$ minutes and $41$ seconds, which is approximately twice the time
required by the CPI method. With CPI and the parameters used, $550$ eigenvalues
needed to be used for the exterior system.

\begin{figure}
  \centering
  \includegraphics[width=.45\textwidth]{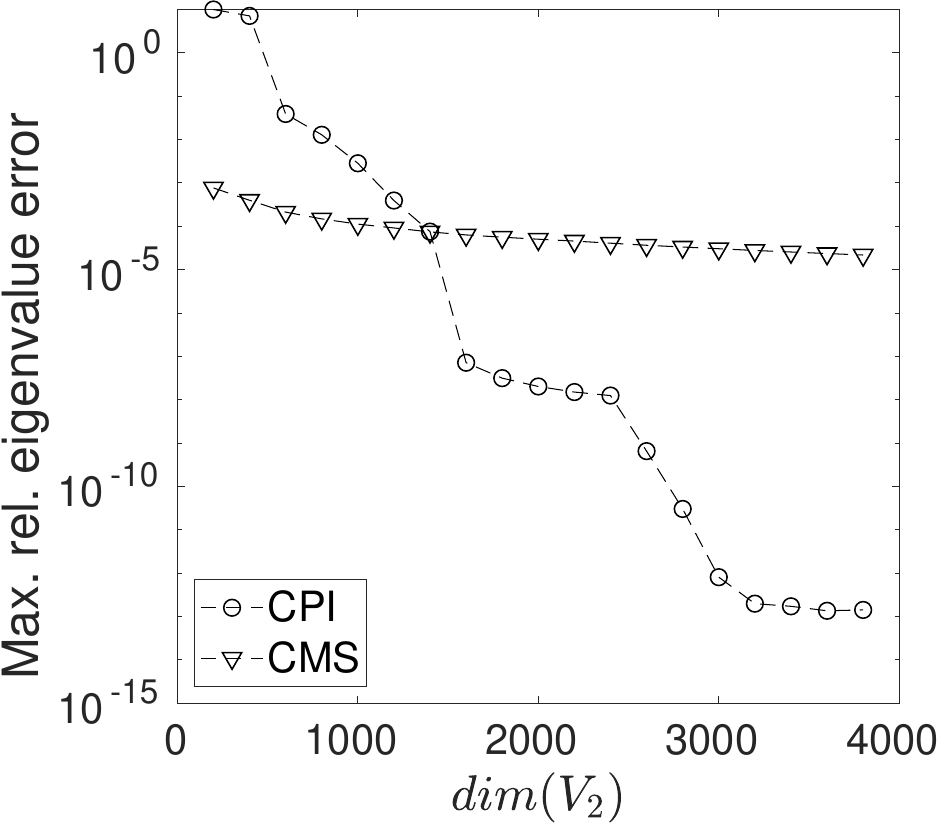} \hfill
  \includegraphics[width=.45\textwidth]{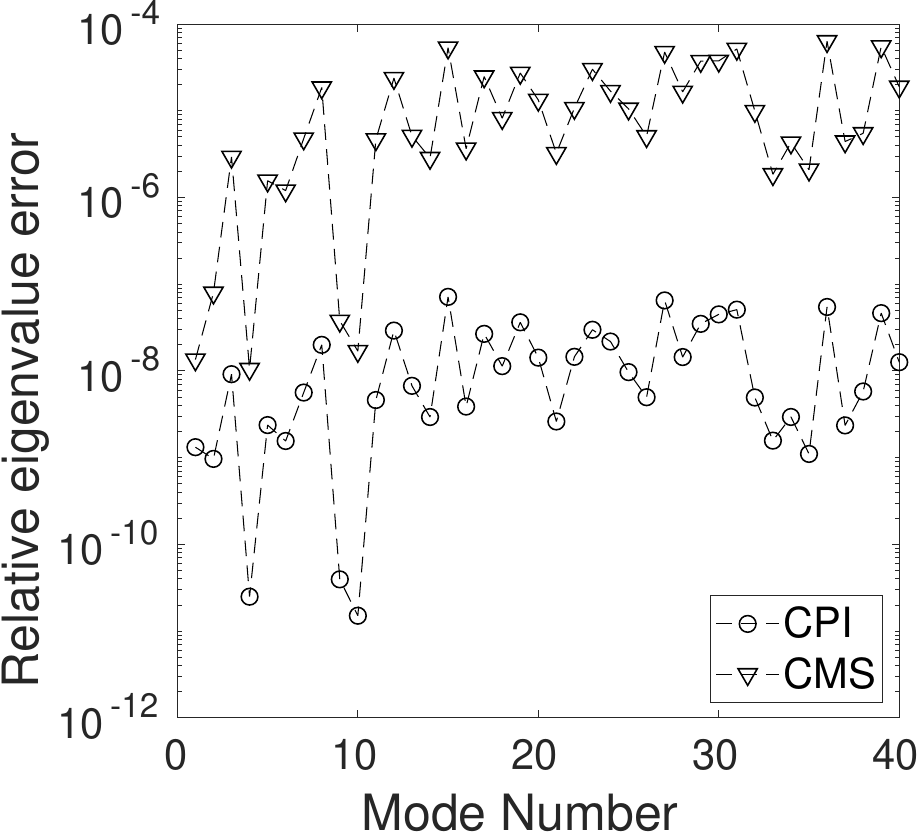}
  \caption{CPI compared to CMS in the 3D acoustic example. Left: largest
    relative eigenvalue error for $40$ of the smallest eigenvalues. Right:
    relative eigenvalue errors for each eigenvalue with $\mathop{dim}(V_2) = 1\,600$ in
    both methods.}
  \label{fig:cpi_vs_cms}
\end{figure}

\section{Conclusions}
We have presented a Condensed Pole Interpolation (CPI) method for efficient
solution of symmetric structured eigenvalue problems by constructing a
particular method subspace $V$. Error analysis for the CPI method shows
convergence of relative eigenvalue error at rate $C \rho^N$ where $N$ is the
number of interpolation points in the spectral interval of interest
$(0,\Lambda)$, and $\rho < 1 $ depends of the oversampling parameter $\gamma >
5/4$. Optimal parameter values for $\gamma$ and $N$ are chosen based on a cost
model. A dimension reduced version of the CPI method with convergence analysis
is given. Numerical experiments on finite element discretised Laplace operator
($d=2,3$) show that the method has practical value and indicate faster
convergence in comparison to the convergence estimate
Eq.~\eqref{FinalRelativeErrorEstimate}. Authors have observed that the CPI
method becomes increasingly competetive against Matlab \texttt{eigs} for large
values of $\Lambda$, excluding precomputations. In addition, the performance is
improved for families of matrices where $n_\Gamma$ and $n_1/n_2$ are small.

The CPI method requires precomputation of a basis for the method subspace $V$.
This involves solution of exterior eigenvectors $(\mu_k,\vec{v}_k)$ satisfying
$\mu_k \leq \gamma \Lambda$ and the solution of $n_\Gamma N$ linear systems.
Additional precomputation cost is produced by SVD in dimension reduction. The
cost model used for optimising $\gamma$ and $N$ does not account for any
precomputation costs. In the acoustic eigenvalue problem used for benchmarking,
the CPI method becomes competitive in comparison to \texttt{eigs} after $40$
eigensolves if precomputation time is taken into account. Even costly
precomputations are justified in processes where eigensolution must be repeated
several times by the designer.

The computational speed of the CPI method comes at a price; the algorithm is
memory intensive compared to \texttt{eigs}. In the acoustic eigenvalue problem
involving the $40$ lowest eigenvalues, the method subspace is of dimension
$1\,601$, whereas the subspace used by \texttt{eigs} is of dimension $80$. The
feasibility of precomputations may be limited by storage more severely than by
computation time, in particular, if dimension reduction is not used.

Application of the CPI method to domain decomposition in the finite element
context presents a topic for future work. In fact, the presented analysis
already extends to the setting where $A_{22}$ and $M_{22}$ are block diagonal
matrices as is the case when multiple subdomains are treated.

\section{Acknowledgements} The geometry for the exterior model in
Section~\ref{sec:acoustic} is loosely based on the MRI head coil design provided
by Siemens Healthineers. The authors are grateful for the comments of the reviewers.

\bibliographystyle{siamplain}
\bibliography{master_new}

\end{document}